\documentclass[12pt]{article}
\usepackage[margin=1.25in]{geometry}

\usepackage{mdwlist}
\usepackage{enumerate}


\usepackage{amsmath,amssymb}
\usepackage{graphics, subfigure, float}
\usepackage{fp, calc}
\usepackage{hyperref}
\usepackage{url}

\usepackage[T1]{fontenc} 
\usepackage{fourier}
\usepackage{bm}


\usepackage{amscd,amsthm}





\usepackage{pst-all}
\usepackage{pstricks-add}
\usepackage{pst-func}
\newpsobject{showgrid}{psgrid}{subgriddiv=1,griddots=10,gridlabels=6pt}
\usepackage{verbatim, comment}
\usepackage{datetime}

\newtheoremstyle{theorem}{1em}{1em}{\slshape}{0pt}{\bfseries}{.}{ }{}
\theoremstyle{theorem}
\newtheorem{theorem}{Theorem}

\newtheorem*{theorem*}{Theorem}
\newtheorem{corollary}[theorem]{Corollary}

\newtheorem{lemma}[theorem]{Lemma}

\newtheorem*{claim*}{Claim}

\theoremstyle{remark}
\newtheorem{remark}{Remark}
\newtheorem*{remark*}{Remark}

\newtheoremstyle{example}{1em}{1em}{}{0pt}{\bfseries}{.}{ }{}


\providecommand{\setN}{\mathbb{N}}
\providecommand{\setZ}{\mathbb{Z}}

\providecommand{\setR}{\mathbb{R}}

\newcommand{\inn}[2]{\langle {#1}, {#2} \rangle}
\newcommand{\E}{\mathop{\mathbb{E}}}

\psset{linewidth=1pt, arrowsize=6pt}

\usepackage{calrsfs} 
\DeclareMathAlphabet{\pazocal}{OMS}{zplm}{m}{n}

\usepackage[displaymath,textmath,graphics, subfigure, floats]{preview} 
\PreviewEnvironment{center} 

\makeatother

\title{Approximate Carath\'eodory bounds via Discrepancy Theory}
\date{}
\author{Victor Reis \thanks{University of Washington, Seattle. Email: {\tt voreis@uw.edu}.} \and Thomas Rothvoss  \thanks{University of Washington, Seattle. Email: {\tt rothvoss@uw.edu}. 
Supported by NSF CAREER grant 1651861 and a David \& Lucile Packard Foundation Fellowship.}}

\begin{document}
\maketitle
\begin{abstract}
  The approximate Carath\'eodory problem in general form is as follows: Given two symmetric convex
  bodies $P,Q \subseteq \setR^m$, a parameter $k \in \setN$ and $\bm{z} \in \textrm{conv}(X)$ with $X \subseteq P$, find $\bm{v}_1,\ldots,\bm{v}_k \in X$ so that $\|\bm{z} - \frac{1}{k}\sum_{i=1}^k \bm{v}_i\|_Q$ is minimized.
  Maurey showed that if both $P$ and $Q$ coincide with the $\| \cdot \|_p$-ball, then an error of $O(\sqrt{p/k})$ is possible.

  We prove a reduction to the vector balancing constant from discrepancy theory which for most cases can provide tight bounds for general $P$ and $Q$. For the case where $P$ and $Q$ are both $\| \cdot \|_p$-balls we prove an upper bound of $\sqrt{ \frac{\min\{ p, \log (\frac{2m}{k}) \}}{k}}$. Interestingly, this bound cannot be obtained taking independent random samples; instead we use the Lovett-Meka random walk. We also prove an extension to the more general case where  $P$ and $Q$ are $\|\cdot \|_p$ and $\| \cdot \|_q$-balls with $2 \leq p \leq q \leq \infty$. 
\end{abstract}

\section{Introduction}

The (exact) Carath\'eodory Theorem is part of most introductory courses on the theory of linear programming: given any vector $\bm{z} \in \textrm{conv}(X)$ where $X \subseteq \setR^m$, there is a subset of 
points $X' \subseteq X$ with $|X'| \leq m+1$ so that already $\bm{z} \in \textrm{conv}(X')$. More recently, the \emph{approximate} version gained interest, where only $k$ vectors from $X$ may be selected with uniform weights and the
goal is to minimize the error in a given norm.

Barman~\cite{BarmanApproximateNash-STOC2015} used an approximate Carath\'eodory bound for algorithms to compute approximate Nash equilibria for bimatrix games as well as for finding $k$-densest subgraphs.
The core argument of \cite{BarmanApproximateNash-STOC2015} is as follows: if one has two players with $n$ strategies each and some payoff matrix $\bm{A} \in [-1,1]^{n \times n}$,
then for any mixed strategy $\bm{y}$ of the column player (i.e. $\bm{y} \in \setR_{\geq 0}^n$ and $\|\bm{y}\|_1=1$) one can apply the approximate Carath\'eodory Theorem for norm $\| \cdot \|_{\infty}$ (or rather equivalently for $\| \cdot \|_{\log(n)}$) and find $k := \Theta(\frac{\log(n)}{\varepsilon^2})$ columns $\bm{a}_1,\ldots,\bm{a}_k$ of $\bm{A}$ so that $\|\bm{A}\bm{y} - \frac{1}{k}\sum_{i=1}^k \bm{a}_i\|_{\infty} \leq \varepsilon$. In other words, any mixed strategy can be $\varepsilon$-approximated by the
unweighted average of only $\Theta(\frac{\log(n)}{\varepsilon^2})$ many pure strategies which then allows for an efficient enumeration.
The approximate Carath\'eodory Theorem has also been useful in algebraic settings. 
For example Deligkas et al~\cite{ApproxPolynomialEqAndIneqJCSS2022} use it to find approximate solutions to systems of polynomial (in)equalities
and Bhargava, Saraf and Volkovich~\cite{FactoringSparsePolynomialsJACM2020} use approximate Carath\'eodory to prove that sparse polynomials have only sparse factors which then allows efficient deterministic factorization of sparse polynomials;
both applications use the variant with respect to the $\| \cdot \|_{\infty}$-norm.

To make the statements formal, for symmetric convex bodies $P,Q \subseteq \setR^m$ and $k \in \setN$, we denote
\[
  \textrm{ac}_k(P,Q) := \sup_{\substack{X \subseteq P, \\ \bm{z} \in \textrm{conv}(X)}} \inf_{\bm{v}_1,\ldots,\bm{v}_k \in X} \Big\|\bm{z}-\frac{1}{k}\sum_{i=1}^k \bm{v}_i\Big\|_Q
\]
as the best error bound with respect to the $\| \cdot \|_Q$-norm for approximating a point $\bm{z}$
in the convex hull of some points in $P$. We would like to point out that the vectors $\bm{v}_1,\ldots,\bm{v}_k$ may be taken with repetition. Here, $\| \cdot \|_Q$ is the norm with $\|\bm{x}\|_Q = \min\{ s \geq 0 \mid \bm{x} \in sQ\}$.
A folklore result is that for the Euclidean norm one has $\textrm{ac}_k(B_2^m,B_2^m) \leq \frac{1}{\sqrt{k}}$ for any $k \geq 1$, which gives a \emph{dimension free} bound. More generally, for $p \geq 1$, it is true that $\textrm{ac}_k(B_p^m,B_p^m) \leq O(\sqrt{\frac{p}{k}})$ where $B_p^m := \{ \bm{x} \in \setR^m \mid \|\bm{x}\|_p \leq 1\}$ is the $\| \cdot \|_p$-unit ball.
This bound is derived from
Maurey's Lemma from functional analysis (which was reported by Pisier~\cite{PisierRemarquesSurUnResultatNonPublicDeMaurey1980}; for an English version, see the appendix of Bourgain and Nelson~\cite{BourgainNelsonArxiv13}).
Algorithmically, the result is simple: write $\bm{z} = \sum_{i=1}^{N} \lambda_i\bm{u}_i$ where $\lambda_1,\ldots,\lambda_N \geq 0$
and $\sum_{i=1}^N \lambda_i=1$. Then sample $\bm{v}_1,\ldots,\bm{v}_k \in \{ \bm{u}_1,\ldots,\bm{u}_N\}$ independently according to  the probabilities $\lambda_i$ (possibly with repetition).

Another approach in the literature by Mirrokni et al~\cite{MirrokniLVW_ICML2017} is based on the desire to avoid the computation of
$\bm{z} = \sum_{i=1}^{N} \lambda_i\bm{u}_i$. Instead they use the \emph{Mirror Descent} algorithm from
convex optimization to compute the sequence $\bm{v}_1,\ldots,\bm{v}_k$ directly. In fact they reprove the
bound of  $\textrm{ac}_k(B_p^m,B_p^m) \leq O(\sqrt{\frac{p}{k}})$ using their framework. More recently,
Combettes and Pokutta~\cite{ApproxCaraViaFrankWolfeMathProg2021} show that the Frank-Wolfe algorithm can also be used to recover the same bounds.
From the current state of the literature, there are two directions that appear natural to follow:
\begin{itemize}
\item \emph{Approximate Carath\'eodory for general pairs of norms.} The existing bounds are for the case where $P$ and $Q$ are the same $\| \cdot \|_p$-ball. Is there a convenient framework that can handle general symmetric convex bodies or at least $P = B_p^m$ and $Q = B_q^m$? 
\item \emph{Tight bounds for approximate Carath\'eodory.} Generally, it is stated that for example
  the bound $\textrm{ac}_k(B_p^m,B_p^m) \leq O(\sqrt{\frac{p}{k}})$ is tight (see e.g. \cite{MirrokniLVW_ICML2017}). But that is only true if one aims for a dimension independent bound. So for which regimes of $m$ vs. $k$ and $p$
  is it possible to improve the bound?
\end{itemize}

A classical area within combinatorics that appears related to these questions is \emph{discrepancy theory}. Let $S_1,\ldots,S_m \subseteq \{ 1,\ldots,n\}$ be a set system over $n$ elements. Then
the goal is to find a bi-coloring $\bm{x} \in \{ -1,1\}^n$ so that the worst imbalance $\max_{i=1,\ldots,m} |\sum_{j \in S_i}x_j|$ is minimized. A seminal result of Spencer~\cite{SixStandardDeviationsSuffice-Spencer1985} says that for $m \geq n$,
the discrepancy is bounded by $O(\sqrt{n \log(\frac{2m}{n})})$. If no element is in more than $t$ sets, then
one can also prove a bound of $2t$, see Beck and Fiala~\cite{IntegerMakingTheorems-BeckFiala81}.
A convex geometry based method by Banaszczyk~\cite{BalancingVectors-Banaszczyk98} shows that for any $\bm{A} \in \setR^{m \times n}$ with column length $\|\bm{A}^j\|_2 \leq 1$ for all $j=1,\ldots,n$ and any symmetric convex body $K \subseteq \setR^m$ with Gaussian measure at least $1/2$ (for example $K = \Theta(\sqrt{\log(m)}) \cdot B_{\infty}^m$ or $K = \Theta(\sqrt{m}) \cdot B_2^m$ work), there is a coloring $\bm{x} \in \{-1,1\}^n$
with $\bm{A}\bm{x} \in 5K$. Interestingly, neither of these cited results of \cite{SixStandardDeviationsSuffice-Spencer1985,IntegerMakingTheorems-BeckFiala81, BalancingVectors-Banaszczyk98} can be obtained by merely taking a uniform random coloring $\bm{x}$.
But for example, the result by Spencer allows for elegant algorithmic proofs. While the first
such algorithm was due to Bansal~\cite{DiscrepancyMinimization-Bansal-FOCS2010}, we focus on the later work of Lovett and Meka~\cite{DiscrepancyMinimization-LovettMekaFOCS12} whose main claim can be paraphrased as follows:
\begin{theorem}[\cite{DiscrepancyMinimization-LovettMekaFOCS12}] \label{thm:LovettMekaRephrased}
  Let $\bm{A} \in \setR^{m \times n}$ and $\bm{x}_0 \in [-1,1]^n$ and let $C_1,C_2>0$ be small enough constants.
  Then there is an efficiently computable distribution $\pazocal{D}(\bm{A},\bm{x}_0)$ with the following properties:
  \begin{enumerate}
  \item[(A)] One has $|\{j \in [n] : x_j \in \{ \pm 1\}\}| \geq \frac{n}{2}$ for all $\bm{x} \sim \pazocal{D}(\bm{A},\bm{x}_0)$.
  \item[(B)] One has $\|\bm{A}(\bm{x}-\bm{x}_0)\|_{\infty} \leq \Delta$ for all  $\bm{x} \sim \pazocal{D}(\bm{A},\bm{x}_0)$ where $\Delta \geq 0$ is
    any parameter satisfying
       $\sum_{i=1}^m \exp( -C_1\frac{\Delta^2}{\|\bm{A}_i\|_2^2}) \leq C_2n$.
  \item[(C)] One has $\E_{\bm{x} \sim \pazocal{D}(\bm{A},\bm{x}_0)}[\bm{x}] = \bm{x}_0$.
  \item[(D)] The random vector $\bm{x}-\bm{x}_0$ is $O(1)$-subgaussian. In particular ${\|\left<\bm{A}_i,\bm{x}-\bm{x}_0\right>\|_{\psi_2} \lesssim \|\bm{A}_i\|_2}$
    for all $i=1,\ldots,m$.
  \end{enumerate}
  The running time to compute a sample is\footnote{Throughout this work, we will use $T(m,n)$ as the best running time to generate a sample from the described distribution. Whenever we state a running time using $T(\cdot ,\cdot)$ we implicitly assume that the function $T$ is non-decreasing in $m$ and $n$ and that $T(m,n) \geq mn$ which corresponds to the input length.}  $T(m,n) \leq O(n^{1+\omega} + n^2m)$.
\end{theorem}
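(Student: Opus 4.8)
The plan is to reprove this as a constrained Gaussian random walk in the style of Lovett and Meka~\cite{DiscrepancyMinimization-LovettMekaFOCS12}; properties (A)--(D) are, up to rephrasing, exactly the output guarantees of that walk, so one may also simply cite it. For the existence part it is cleanest to run the continuous-time version: let $\bm{X}_t$ be a Brownian motion with $\bm{X}_0 = \bm{x}_0$, run for time $T = \Theta(1)$, that at each instant $t$ is allowed to move only inside the subspace $V_t := \{\bm{u}\in\setR^n : u_j = 0 \text{ for every frozen }j,\ \inn{\bm{A}_i}{\bm{u}} = 0 \text{ for every tight }i\}$, where coordinate $j$ becomes permanently \emph{frozen} the moment $|X_{t,j}|$ reaches $1$ and constraint $i$ becomes permanently \emph{tight} the moment $|\inn{\bm{A}_i}{\bm{X}_t - \bm{x}_0}|$ reaches $\Delta$. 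Since $\bm{X}_t$ is then a martingale with Gaussian increments, property (C) is immediate; property (B) holds for \emph{every} sample path, because once a constraint reaches level $\Delta$ the walk moves orthogonally to it; and the $\psi_2$ bound of (D) falls out of the concentration estimate below. The algorithmic statement then comes from simulating this process with small time steps $\gamma$, using thresholds slightly below $1$ and $\Delta$ together with a final rounding to absorb the $O(\gamma\sqrt{\log(mn)})$-scale overshoots at freezing times --- the care needed for this is the technical core of \cite{DiscrepancyMinimization-LovettMekaFOCS12}.

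Two estimates carry the proof. The first is concentration: $\inn{\bm{A}_i}{\bm{X}_t - \bm{x}_0}$ is a martingale with quadratic variation at most $\int_0^t \|P_{V_s}\bm{A}_i\|_2^2\,ds \leq t\,\|\bm{A}_i\|_2^2 = O(\|\bm{A}_i\|_2^2)$, hence it is $O(\|\bm{A}_i\|_2)$-subgaussian (this is (D)), and by a maximal inequality the probability that constraint $i$ ever becomes tight is at most $\exp(-\Omega(\Delta^2/\|\bm{A}_i\|_2^2))$. Choosing $C_1$ small enough to absorb the constant in this exponent and $C_2$ small enough to absorb the resulting constant factor, the hypothesis $\sum_i\exp(-C_1\Delta^2/\|\bm{A}_i\|_2^2)\leq C_2 n$ forces the expected number of constraints tight by time $t$ to be at most $\tfrac{n}{16}$, for every $t\leq T$. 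The second is a potential argument: $\|\bm{X}_t\|_2^2 - \int_0^t \dim V_s\,ds$ is a martingale, so $\int_0^T \E[\dim V_s]\,ds = \E\|\bm{X}_T\|_2^2 - \|\bm{x}_0\|_2^2 \leq n$, using $\|\bm{X}_T\|_2^2\leq n$. On the other hand $\dim V_s \geq \#\{\text{unfrozen coords at }s\} - \#\{\text{tight constraints at }s\} \geq \#\{\text{unfrozen at }T\} - \#\{\text{tight at }s\}$ since coordinates only ever freeze, so $\E[\dim V_s] \geq \E[\#\{\text{unfrozen at }T\}] - \tfrac{n}{16}$ for all $s\leq T$; integrating over $[0,T]$ with $T=4$ yields $\E[\#\{\text{unfrozen coords at }T\}] \leq \tfrac{n}{16} + \tfrac{n}{T} < \tfrac{n}{2}$. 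Thus more than half the coordinates are frozen by time $T$ in expectation; by stopping the walk once $n/2$ of them are frozen (which, running to a slightly larger constant time, happens except on an $o(1)$-probability event that one handles by restarting) property (A) holds for the output distribution, while optional stopping preserves (C).

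The step I expect to be the genuine obstacle is not any individual estimate but making (A)--(D) hold \emph{simultaneously for one distribution}: these are tightly linked --- the $\psi_2$ bound (D) is what controls the number of tight constraints, which is what keeps $\dim V_s$ proportional to $n$, which is exactly what forces the freezing in (A) --- and the delicate point is to secure the worst-case bound (B) and the exact $\pm 1$ values in (A) \emph{without} spoiling the exact martingale identity (C). Concretely, in the discretized walk one must show that the small overshoots incurred when coordinates are rounded onto the cube boundary do not accumulate into a large inner product with any row $\bm{A}_i$; this works because each overshoot is only $O(\gamma\sqrt{\log(mn)})$ and lives in a single coordinate, and (being conditionally symmetric) such perturbations behave subgaussianly across a constraint, but it is what pins down how small $\gamma$ must be. Granting this, the walk runs for $\mathrm{poly}(m,n)$ steps, each dominated by one orthogonal projection onto $V_s$ (an $n\times n$ linear-algebra solve, $O(n^{\omega})$) plus an $O(nm)$ update of the $m$ constraint values; summing over the steps and optimizing $\gamma$ as in \cite{DiscrepancyMinimization-LovettMekaFOCS12} gives the claimed $T(m,n) \leq O(n^{1+\omega} + n^2 m)$.
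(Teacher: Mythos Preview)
Your continuous-time sketch is sound and recovers (A)--(D) by essentially the same martingale reasoning, but the paper's algorithm is \emph{not} a discretized Brownian motion and sidesteps precisely the overshoot/rounding issues you flag as the obstacle. It runs a discrete walk of at most $8n$ steps: at step $t$ the direction $\bm{u}_t$ is drawn uniformly from the unit sphere of $V_t$ (not a Gaussian), with the \emph{additional} constraint $\langle \bm{u}_t,\bm{x}_t\rangle=0$; the step length is $\delta_t=\min\{1,r_t\}$ where $r_t$ is the distance to $\partial K$ along $\pm\bm{u}_t$ for $K=[-1,1]^n\cap\{\bm{x}:|\langle\bm{A}_i,\bm{x}-\bm{x}_0\rangle|\le\Delta\ \forall i\}$; and one sets $\bm{x}_{t+1}=\bm{x}_t+\sigma_t\delta_t\bm{u}_t$ with a fresh random sign $\sigma_t\in\{-1,1\}$. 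Orthogonality to $\bm{x}_t$ forces $\|\bm{x}_{t+1}\|_2^2=\|\bm{x}_t\|_2^2+\delta_t^2$, so there are at most $O(n)$ ``long'' steps ($\delta_t=1$); every ``short'' step lands on $\partial K$ and, thanks to the random sign, with probability $\ge 1/2$ makes a new coordinate or constraint tight, so there are $O(n)$ of those too before termination.

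What this buys relative to your route: because the step is capped at the boundary, frozen coordinates are \emph{exactly} $\pm 1$ and tight constraints are \emph{exactly} at level $\Delta$, so (A) and (B) hold with no post-hoc rounding that could disturb (C); and because the iteration count is $O(n)$ independently of any accuracy parameter, the running time is directly $O(n)\cdot(n^{\omega}+nm)=O(n^{1+\omega}+n^2m)$. Your appeal to ``optimizing $\gamma$ as in Lovett--Meka'' does not reach this: their step count is $\Theta(\gamma^{-2})$ with $\gamma$ coupled to the threshold slack $\delta$, their output only satisfies $|x_j|\ge 1-\delta$, and rounding those coordinates onto $\{\pm 1\}$ is a one-sided (non-symmetric) perturbation, so the ``conditionally symmetric'' justification you give for preserving (C) and (D) does not apply as stated. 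The subgaussian bound (D) is obtained just as you describe, except that in place of quadratic variation one uses that a uniform unit vector from a subspace of dimension $\ge n/8$ is $O(1/\sqrt n)$-subgaussian in every fixed direction, and then sums $O(n)$ such martingale increments.
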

Here we use the notation $A \lesssim B$ if there is a universal constant $C>0$ so that $A \leq C \cdot B$. 
The statement differs in several aspects to the original statement of \cite{DiscrepancyMinimization-LovettMekaFOCS12}.
We discuss and justify the changes in Appendix~\ref{appendix:LovettMeka}.
Intuitively, $\pazocal{D}(\bm{A},\bm{x}_0)$ is simply the outcome of a \emph{Brownian motion} starting at $\bm{x}_0$ that freezes coordinates as soon as they hit $+1$ or $-1$ and it freezes constraints if $|\left<\bm{A}_i,\bm{x}-\bm{x}_0\right>| = \Delta$. For example, if $\bm{A} \in \{ 0,1\}^{m \times n}$ with $m \geq n$ is the
incidence matrix in Spencer's setting, then one may choose $\Delta := \Theta(\sqrt{n\log(\frac{2m}{n})})$ and obtain a partial coloring of discrepancy $\Delta$ that colors at least half the coordinates\footnote{One can then obtain a full coloring by iterating the argument $O(\log n)$ times. In the $i$th such iteration (with $i \geq 0$) there are at most $n/2^i$ elements uncolored, so the suffered discrepancy decreases to $O(\sqrt{ (n/2^{i}) \log \frac{2m}{n/2^i} })$. Summing over these terms gives a convergent sum of value $O(\Delta)$.}.

\subsection{Our contribution}

The \emph{vector balancing constant} for two symmetric convex bodies $P,Q \subseteq \setR^m$ is
\[
 \textrm{vb}(P,Q) := \sup\Big\{ \min_{\bm{x} \in \{ -1,1\}^n} \Big\| \sum_{i=1}^n x_i\bm{v}_i \Big\|_Q \mid n \in \setN, \bm{v}_1,\ldots,\bm{v}_n \in P \Big\}
\]
The connection between the approximate Carath\'eodory problem and vector balancing was already discovered by Dadush et al~\cite{BalancingVectorsInAnyNorm-DadushEtAl-FOCS2018}
who proved that for any symmetric convex bodies $P,Q \subseteq \setR^m$ one has
$\textrm{ac}_k(P,Q) \leq \frac{\textrm{vb}(P,Q)}{k}$.
But for example for $P=Q=B_2^m$ one has $\textrm{vb}(B_2^m,B_2^m) = \Theta(\sqrt{m})$ and so the obtained bound is $O(\frac{\sqrt{m}}{k})$,
which is suboptimal if $k \ll m$. Instead, we suggest a reduction to a slight variant of the vector balancing
constant that allows for tight bounds. Let 
\[
 \textrm{vb}_{n}(P,Q) := \sup\Big\{ \min_{\bm{x} \in \{ -1,1\}^n} \Big\| \sum_{i=1}^n x_i\bm{v}_i \Big\|_Q \mid \bm{v}_1,\ldots,\bm{v}_n \in P \Big\}
\]
be the vector balancing constant restricted to exactly $n$ vectors\footnote{We should note that this is the same as asking for \emph{at most $n$} vectors as $\bm{0} \in P$.}.
We prove the following: 



\begin{theorem} \label{thm:ReductionApxCaraToVB}
  For any symmetric convex bodies $P,Q \subseteq \setR^m$ and any $k \in \setN$ one has
  \[
    \textrm{ac}_k(P,Q) \leq 4\sum_{\ell \geq 1} \frac{1}{k2^{\ell}} \cdot \textrm{vb}_{k2^{\ell}}(P,Q)
  \]
  The vectors $\bm{v}_1,\ldots,\bm{v}_k$ can be found in time $O(\log m)$ times the time to find a coloring behind $\textrm{vb}_{t}(P,Q)$ where $t \leq m+1$,
  assuming we are given $\bm{z}$ as convex combination of at most $m+1$ vectors from $X$. 


\end{theorem}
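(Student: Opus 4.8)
The plan is to reduce the approximate Carathéodory problem to a sequence of vector balancing problems by a halving/rounding argument. Suppose $\bm{z} = \sum_{j=1}^N \lambda_j \bm{u}_j$ with $\bm{u}_j \in X \subseteq P$, $\lambda_j \ge 0$, $\sum_j \lambda_j = 1$, and (by the exact Carathéodory theorem) $N \le m+1$. Without loss of generality we may assume the $\lambda_j$ have a common denominator that is a power of $2$ times $k$: indeed, rounding each $\lambda_j$ to a nearby multiple of $1/(k2^{L})$ for $L$ large moves $\bm{z}$ only negligibly, and at the end we can absorb the error (or, cleaner, we first argue the bound for dyadic-rational $\lambda_j$ and then take limits using continuity of $\mathrm{ac}_k$ in $\bm{z}$). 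So write $\lambda_j = a_j / (k 2^{L})$ with $a_j \in \setN_0$ and $\sum_j a_j = k 2^L$. This is the same as saying $k2^L \bm{z}$ is a sum of $k 2^L$ vectors from $X$ (with repetition); equivalently, we have a multiset $M_L$ of $k 2^L$ points of $X$ whose average is exactly $\bm{z}$.

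The core of the argument is a single ``halving step'': given a multiset $M$ of $2t$ points from $P$ whose sum is $\bm{s}$, I want to produce a multiset $M'$ of $t$ points from $P$ (a sub-multiset of $M$, taking one from each of $t$ well-chosen pairs) whose sum $\bm{s}'$ satisfies $\|\bm{s} - 2\bm{s}'\|_Q \le \mathrm{vb}_{t}(P,Q)$. To do this, pair up the $2t$ points arbitrarily into pairs $(\bm{b}_i, \bm{c}_i)$, $i = 1, \dots, t$, and set $\bm{v}_i := \tfrac{1}{2}(\bm{b}_i - \bm{c}_i) \in P$ (using symmetry and convexity of $P$). By definition of $\mathrm{vb}_t$, there is a sign vector $\bm{x} \in \{-1,1\}^t$ with $\|\sum_i x_i \bm{v}_i\|_Q \le \mathrm{vb}_t(P,Q)$. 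From each pair keep $\bm{b}_i$ if $x_i = +1$ and $\bm{c}_i$ if $x_i = -1$; call the kept point $\bm{w}_i$, and note $\bm{w}_i = \tfrac{1}{2}(\bm{b}_i + \bm{c}_i) + x_i \bm{v}_i$. Hence $\sum_i \bm{w}_i = \tfrac{1}{2}\bm{s} + \sum_i x_i \bm{v}_i$, so with $\bm{s}' := \sum_i \bm{w}_i$ we get $\|\bm{s} - 2\bm{s}'\|_Q = 2\|\sum_i x_i \bm{v}_i\|_Q \le 2\,\mathrm{vb}_t(P,Q)$. (I will need to recheck the exact constant against the $4$ in the statement — there may be a factor lost because after halving, the ``target'' for the next round is $2\bm{s}'$, which is a sum of $t$ points of $P$ scaled, so one has to track how the scaled residual propagates; the factor $4$ and the $1/(k2^\ell)$ weighting strongly suggest a telescoping where each level contributes $\tfrac{2}{k2^\ell}\mathrm{vb}_{k2^\ell}$ and the geometric tail doubles it.)

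Now iterate: starting from $M_L$ (size $k 2^L$, average exactly $\bm{z}$), apply the halving step $L$ times to obtain multisets of sizes $k2^{L-1}, k2^{L-2}, \dots, k2^1, k$. Let $\bm{z}_\ell := \tfrac{1}{k2^\ell}\sum_{\bm{w} \in M_\ell} \bm{w}$ be the average after reaching size $k 2^\ell$; then $\bm{z}_L = \bm{z}$ and the halving step gives $\|\bm{z}_\ell - \bm{z}_{\ell-1}\|_Q \le \tfrac{1}{k 2^{\ell-1}} \cdot \tfrac{1}{2}\cdot 2\,\mathrm{vb}_{k2^{\ell-1}}(P,Q)$ after normalizing the ``$\bm{s} - 2\bm{s}'$'' bound by the appropriate factor of $k2^\ell$. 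Telescoping, $\|\bm{z} - \bm{z}_0\|_Q \le \sum_{\ell=1}^{L} \|\bm{z}_\ell - \bm{z}_{\ell-1}\|_Q \le 4 \sum_{\ell \ge 1} \tfrac{1}{k 2^\ell}\,\mathrm{vb}_{k2^\ell}(P,Q)$, and $\bm{z}_0$ is the average of a size-$k$ multiset of $X$, which is exactly what $\mathrm{ac}_k$ asks for. Finally, since $N \le m+1$ and at every stage the multisets of \emph{distinct} points involved have at most $m+1$ elements, each vector-balancing subproblem we actually solve uses at most $t \le m+1$ vectors, giving the running-time claim: $O(\log m)$ halving steps (one can start the iteration at $L = O(\log m)$ because rounding the $\lambda_j$ to precision $1/(k \cdot \mathrm{poly}(m))$ changes $\bm{z}$ by less than the target error), each invoking one coloring oracle for $\mathrm{vb}_t$.

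The step I expect to be the main obstacle is making the reduction to dyadic weights rigorous without losing anything in the bound: naively rounding $\lambda_j$ introduces an additive error in $\|\cdot\|_Q$ that must be controlled by $\mathrm{vb}$ quantities or shown to vanish in a limit, and one must be careful that the number of distinct points stays $\le m+1$ throughout (so that the oracle calls are to $\mathrm{vb}_t$ with $t \le m+1$, not to $\mathrm{vb}$ with exponentially many vectors). The clean fix is probably: prove the inequality first assuming $\bm{z}$ has a representation with $\lambda_j$ dyadic of denominator $k2^L$ for \emph{some} finite $L$ (no error at all in that case, by the telescoping above with the sum truncated at $L$), then note the set of such $\bm{z}$ is dense in $\mathrm{conv}(X)$ and that both sides of the claimed inequality behave well under this limit — $\mathrm{ac}_k$ because the infimum over the finite set $X^k$ is continuous in $\bm{z}$, and the right-hand side because it does not depend on $\bm{z}$ at all. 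The bookkeeping of the constant $4$ versus the factor-$2$'s introduced at each halving level is the other place to be careful, but it is just arithmetic once the telescoping is set up.
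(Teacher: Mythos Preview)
Your argument is correct for the inequality but takes a genuinely different route from the paper. The paper works at the level of the \emph{coefficients}: writing $\bm{z}=\sum_{i=1}^n\lambda_i\bm{u}_i$ with $n\le m+1$ and $\lambda_i\in\frac{1}{k2^L}\setZ_{\ge 0}$, it rounds the $\lambda_i$ bit-by-bit --- in each step the indices $i$ whose current coefficient has an odd least-significant bit are colored via $\textrm{vb}$ applied to the vectors $\bm{u}_i$ themselves (at most $n\le m+1$ of them), and that bit is rounded up or down accordingly. This yields coefficients in $\frac{1}{k}\setZ_{\ge 0}$ summing to at most $1$, hence at most $k$ vectors; to get \emph{exactly} $k$, the paper uses a separate translation trick (shift $X$ by a fixed $\bm{u}_0\in X$ so that $\bm{0}$ lies in the shifted set, then pad with copies of $\bm{0}$), which costs the extra factor of $2$. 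Your pairing-and-halving keeps the multiset size at exactly $k2^\ell$ at every level and so lands on exactly $k$ with no padding --- this is cleaner, and once you re-index your telescoping sum you get $\sum_{j\ge 0}\frac{1}{k2^{j}}\textrm{vb}_{k2^{j}}(P,Q)$, which, using $\textrm{vb}_k\le\textrm{vb}_{2k}$, is bounded by $3\sum_{\ell\ge 1}\frac{1}{k2^{\ell}}\textrm{vb}_{k2^{\ell}}(P,Q)$; so the constant $4$ is certainly fine.

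One point to fix for the running-time claim: in your scheme the vector-balancing instance at level $\ell$ involves $t=k2^{\ell-1}$ vectors (the half-differences $\bm{v}_i$), which can be far larger than $m+1$; the fact that $M_\ell$ has at most $m+1$ \emph{distinct} points does not by itself make the $\bm{v}_i$ few. The easy repair is to pair identical points with each other first, so that all but at most $\lfloor (m+1)/2\rfloor$ half-differences are zero and contribute nothing to the balancing; then the oracle call really is to $\textrm{vb}_t$ with $t\le m+1$. The paper sidesteps this entirely because it colors indices of the distinct $\bm{u}_i$, of which there are at most $m+1$ by construction.
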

For most bodies the quantity $\textrm{vb}_{k}(P,Q)$ grows sublinear in $k$ and the sum is dominated by the first term,
in which case one has $\textrm{ac}_k(P,Q) \lesssim \frac{1}{k} \cdot \textrm{vb}_{k}(P,Q)$. For example if $P = Q = B_2^m$
one then has $\textrm{vb}_k(B_2^m,B_2^m) = \Theta(\sqrt{k})$ and so one recovers the $O(\frac{1}{\sqrt{k}})$ bound mentioned earlier\footnote{Though not all bodies allow for such a sublinear dependence. For example fix $1 \leq k \leq m/2$. Then one has $\textrm{vb}_k(B_1^m,B_1^m) = \Theta(k)$ and so Theorem~\ref{thm:ReductionApxCaraToVB} provides a suboptimal bound of $\textrm{ac}_k(B_1^m,B_1^m) \leq O(\log \frac{m}{k})$. On the other hand, indeed it is true that $\textrm{ac}_k(B_1^m,B_1^m) \geq \Omega(1)$ meaning that the approximate Carath\'eodory bound does not even improve with $k$ (at least as long as $k \leq m/2$). To see this, consider $X := \{ \bm{e}_1,\ldots,\bm{e}_m\}$ and a target of $\bm{z} := (\frac{1}{m},\ldots,\frac{1}{m})$. Then for any $\bm{v}_1,\ldots,\bm{v}_k \in X$ one has $\|\bm{z}-\frac{1}{k}\sum_{i=1}^k \bm{v}_i\|_1 \geq \Omega(1)$ since any coordinate whose unit vector is not included will contribute $\frac{1}{m}$ to the norm.}.
Also note that by \cite{LSV1986}, for any symmetric convex bodies $P,Q \subseteq \setR^m$ and any $t \in \setN$ one has $\textrm{vb}_t(P,Q) \leq 2 \textrm{vb}_m(P,Q)$, meaning that the worst case is basically attained for $m$ many vectors.
Then the infinite sum in Theorem~\ref{thm:ReductionApxCaraToVB} is
dominated by the first $\log(2m/k)$ terms if $k \leq m$ and the first
term if $k \geq m$.

For balancing vectors in $B_p^m$ into $B_q^m$ we prove the following: 
\begin{theorem}\label{thm:VectorBalancingFromBpToBq}
  For $2 \leq p \leq q \leq \infty$ and $n \leq m$ one has
  \[
  \textrm{vb}_{n}(B_p^m,B_q^m) \lesssim \frac{\sqrt{\min\{ p, \log (\frac{2m}{n}) \}}}{\frac{1}{2}-\frac{1}{p}+\frac{1}{q}} \cdot n^{1/2-1/p+1/q}
\]
The time to find the corresponding coloring is $O(\log n) \cdot T(m,n)$.
\end{theorem}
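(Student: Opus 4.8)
The plan is to reduce Theorem~\ref{thm:VectorBalancingFromBpToBq} to a single \emph{partial coloring} step of Lovett--Meka type and then iterate it $O(\log n)$ times. Write $\alpha:=\tfrac12-\tfrac1p+\tfrac1q\in(0,\tfrac12]$ and $\sigma(t):=\sqrt{\min\{p,\log(2m/t)\}}$. Given $\bm v_1,\dots,\bm v_n\in B_p^m$, let $\bm A\in\setR^{m\times n}$ have these as columns, write $\bm A_1,\dots,\bm A_m$ for its rows, and set $M_i:=\|\bm A_i\|_2$. Two elementary facts will be used throughout: $M_i\le\sqrt n$ (because $|v_{j,i}|\le\|\bm v_j\|_p\le1$), and, since $p\ge2$, the power‑mean inequality $\|\bm A_i\|_2\le n^{1/2-1/p}\|\bm A_i\|_p$ gives $\sum_{i=1}^m M_i^p\le n^{p/2-1}\sum_j\|\bm v_j\|_p^p\le n^{p/2}$. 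I will also use the Hölder interpolation $\|\bm w\|_q\le\|\bm w\|_\infty^{1-p/q}\|\bm w\|_p^{p/q}$, valid for $2\le p\le q\le\infty$. The same facts hold for any submatrix obtained by keeping $n'\le n$ of the columns, with $n$ replaced by $n'$ and the number of rows unchanged.

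The core claim is a partial coloring lemma: for any $n'\le m$ vectors in $B_p^m$ there is $\bm x\in[-1,1]^{n'}$ with at least $n'/2$ coordinates in $\{\pm1\}$ and $\|\sum_j x_j\bm v_j\|_q\lesssim\sigma(n')\,(n')^\alpha$, computable from $O(1)$ samples of the distribution $\pazocal{D}$ of Theorem~\ref{thm:LovettMekaRephrased}. I prove the two halves $\sqrt{\log(2m/n')}(n')^\alpha$ and $\sqrt p\,(n')^\alpha$ of this bound separately and take whichever is smaller. For the first, apply $\pazocal{D}$ (with $\bm x_0=\bm 0$, $\Delta=1$) to the matrix obtained from $\bm A$ by scaling row $i$ by $1/d_i$, where $d_i:=\tau$ on ``heavy'' rows ($M_i\ge\tau$) and $d_i:=sM_i$ otherwise, with $\tau\asymp(n')^{1/2-1/p}$ and $s\asymp\sqrt{1+\log(2m/n')}$. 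There are at most $(n')^{p/2}/\tau^p\lesssim n'$ heavy rows, and the light rows contribute at most $m e^{-C_1 s^2}\lesssim n'$ to the exponential sum, so hypothesis~(B) applies and yields $|\langle\bm A_i,\bm x\rangle|\le d_i$ for all $i$. Since $\|(d_i)_i\|_\infty\le s\tau$ and $\|(d_i)_i\|_p^p\le(1+s^p)(n')^{p/2}$, interpolation gives $\|\sum_j x_j\bm v_j\|_q\le\|(d_i)_i\|_\infty^{1-p/q}\|(d_i)_i\|_p^{p/q}\lesssim s\,\tau^{1-p/q}(n')^{p/(2q)}=s\,(n')^\alpha$, using $(\tfrac12-\tfrac1p)(1-\tfrac pq)+\tfrac p{2q}=\alpha$.

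For the $\sqrt p$ half I instead use the \emph{uniform} scaling $d_i\equiv D\asymp\sqrt p\,(n')^{1/2-1/p}$. The hypothesis $\sum_i e^{-C_1D^2/M_i^2}\le C_2n'$ follows from the elementary pointwise bound $e^{-u}\le(p/2e)^{p/2}u^{-p/2}$ (the maximum of $u^{p/2}e^{-u}$): indeed $\sum_i e^{-C_1D^2/M_i^2}\le(p/2e)^{p/2}(C_1D^2)^{-p/2}\sum_iM_i^p\le\big(pn'/2eC_1D^2\big)^{p/2}\le C_2n'$ once $D$ is a large enough constant multiple of $\sqrt p\,(n')^{1/2-1/p}$ — this is the one step that genuinely exploits $\sum_iM_i^p\le(n')^{p/2}$. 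Now property~(B) gives $\|\sum_jx_j\bm v_j\|_\infty\le D$ surely, while property~(D) gives that $\langle\bm A_i,\bm x\rangle$ is sub‑Gaussian with parameter $\lesssim M_i$, so $\E\|\sum_jx_j\bm v_j\|_p^p\le\sum_i(CM_i\sqrt p)^p\le(C\sqrt{pn'})^p$ and hence $\|\sum_jx_j\bm v_j\|_p\lesssim\sqrt{pn'}$ with probability $\ge\tfrac12$ by Markov. A sample of $\pazocal{D}$ satisfies both estimates with probability $\ge\tfrac12$, and interpolation gives $\|\sum_jx_j\bm v_j\|_q\le D^{1-p/q}(C\sqrt{pn'})^{p/q}\lesssim\sqrt p\,(n')^\alpha$. (For $p=\infty$ the uniform scaling $D\asymp\sqrt{n'\log(2m/n')}$ suffices directly, recovering Spencer.) This proves the partial coloring lemma.

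Finally, iterate: starting from $\bm x=\bm 0$, repeatedly apply the partial coloring to the still‑fractional columns (freezing the rest) with $\bm x_0$ the current point; after $O(\log n)$ rounds this yields a full coloring, and in round $r$ at most $n_r\le n/2^r$ coordinates are fractional, so by the triangle inequality $\|\sum_jx_j\bm v_j\|_q\lesssim\sum_r\sigma(n_r)\,n_r^\alpha$. Since $n_r^\alpha\le n^\alpha 2^{-r\alpha}$, $\sum_r2^{-r\alpha}\asymp\tfrac1\alpha=\tfrac1{1/2-1/p+1/q}$, the logarithmic factor grows only like $\sqrt{\log(2m/n)+r}$, and $\sigma$ is capped at $\sqrt p$ once $n_r\le2me^{-p}$, this sum is $\lesssim\tfrac1\alpha\,\sigma(n)\,n^\alpha$, the claimed bound; the running time is $O(\log n)$ samples of $\pazocal{D}$, i.e.\ $O(\log n)\cdot T(m,n)$. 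The step I expect to be most delicate is exactly this last summation: the map $t\mapsto\sigma(t)t^\alpha$ is not monotone (it has a single interior maximum), so bounding $\sum_r\sigma(n_r)n_r^\alpha$ by $O(\tfrac1\alpha\sigma(n)n^\alpha)$ for every admissible sequence $n_0\le n$, $n_{r+1}\le n_r/2$ — including sequences that drop quickly past that maximum — requires combining the geometric decay with the $\sqrt p$‑cap; a secondary subtlety is the joint use of properties~(B) and~(D) of $\pazocal{D}$ in the $\sqrt p$‑estimate.
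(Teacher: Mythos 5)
Your proposal is correct, and its overall architecture coincides with the paper's: a Lovett--Meka partial coloring step, the interpolation $\|\bm{w}\|_q \le \|\bm{w}\|_\infty^{1-p/q}\|\bm{w}\|_p^{p/q}$ between an almost-sure $\ell_\infty$ bound (property (B)) and an expected $\ell_p$ bound (property (D) plus $\sum_i \|\bm{A}_i\|_2^p \le n^{p/2}$), and then $O(\log n)$ iterations resolved by the triangle inequality. Your $\sqrt{p}\,(n')^\alpha$ half is essentially identical to the paper's Lemmas~\ref{lem:LM-ExpectedLpNorm} and~\ref{lem:LM-UpperBoundOnInfinityNormInTermsOfn} combined in Theorem~\ref{thm:LqBoundForLpBoundedColumns}, down to the pointwise bound $e^{-u}\lesssim (p/u)^{p/2}$. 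Where you genuinely diverge is the $\sqrt{\log(2m/n')}$ regime: the paper obtains it by a clean reduction to the $\sqrt{p_0}$ case with $p_0 := \ln(e^2 m/n)$, using $\|\bm{A}^j\|_{p_0} \le m^{1/p_0-1/p}\|\bm{A}^j\|_p$ so that $(m/n)^{1/p_0}=O(1)$ absorbs the norm change; you instead run a direct Spencer-style argument with non-uniform per-row budgets $d_i$ (a heavy/light split at $\tau \asymp (n')^{1/2-1/p}$, with Markov on $\sum_i M_i^p \le (n')^{p/2}$ controlling the number of heavy rows), then apply the interpolation to the budget vector $(d_i)_i$ itself. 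Your route yields an almost-sure bound in that regime rather than an expectation bound, at the cost of a slightly more involved verification of hypothesis (B); the paper's route is shorter but reuses the $\sqrt{p}$ machinery as a black box. Both proofs are equally terse about the final summation $\sum_r \sigma(n_r)n_r^\alpha \lesssim \alpha^{-1}\sigma(n)n^\alpha$; you are right to flag it, and the point that makes it go through (which you essentially name) is that small $\alpha$ forces $p \le 2/(1-2\alpha)$ close to $2$, so the cap $\sigma(n_r)\le\sqrt{p}=O(\sigma(n))$ kills the would-be $\alpha^{-3/2}$ contribution from the growing logarithm.
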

We should point out that the upper bound on $\textrm{vb}_n(B_p^m,B_q^m)$ itself is already proven in~\cite{VecBalancingInLebesqueReisRothvossArxiv2020}. However, that
argument goes via the Gaussian measure of the suitable partial colorings and the only known algorithms
are via convex optimization resulting in large polynomial running times. In contrast, here
we give a streamlined argument that shows that the more efficient Lovett-Meka algorithm can be used to
obtain the same bound rather than relying on convex optimization.

Combining the results above then gives:
\begin{theorem} \label{ApxCaraForLpLq}
  Let $2 \leq p \leq q \le \infty$ and $k \in \setN$. Then 
  \[
   \textrm{ac}_k(B_p^m,B_q^m) \lesssim \frac{1}{\frac{1}{2}-\frac{1}{p}+\frac{1}{q}} \cdot \frac{\sqrt{\min\{ p, \log (\frac{2m}{k}) \}}}{k^{1/2+1/p-1/q}}
 \]
 The vectors $\bm{v}_1,\ldots,\bm{v}_k$ can be found in time $O(\log^2 m) \cdot T(m,m+1) \leq O(m^{1+\omega}\log^2(m))$ assuming we are given $\bm{z}$
 as a convex combination of at most $m+1$ vectors in $X$.
\end{theorem}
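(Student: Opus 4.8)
The plan is to obtain Theorem~\ref{ApxCaraForLpLq} by combining the two preceding results: Theorem~\ref{thm:ReductionApxCaraToVB} reduces the approximate Carath\'eodory bound to a weighted sum of the restricted vector balancing constants $\textrm{vb}_{k2^{\ell}}(B_p^m,B_q^m)$, and Theorem~\ref{thm:VectorBalancingFromBpToBq} bounds each of them. Write $\alpha := \frac12 - \frac1p + \frac1q$. I may assume $\alpha > 0$, since otherwise one necessarily has $p = 2$, $q = \infty$ and the claimed bound is $+\infty$; then $2 \le p \le q \le \infty$ forces $0 < \alpha \le \frac12$, so $1 - \alpha = \frac12 + \frac1p - \frac1q \in [\frac12, 1)$. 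I also assume $k \le m$, the regime in which the bound is of interest (for $k \ge m$ the sum below is dominated by its first term and the same computation applies). Theorem~\ref{thm:ReductionApxCaraToVB} gives
\[
  \textrm{ac}_k(B_p^m, B_q^m) \;\le\; 4 \sum_{\ell \ge 1} \frac{1}{k 2^{\ell}} \, \textrm{vb}_{k 2^{\ell}}(B_p^m, B_q^m),
\]
and I would estimate the terms in two groups, according to whether $k 2^{\ell} \le m$ or not.

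For the indices with $k 2^{\ell} \le m$, Theorem~\ref{thm:VectorBalancingFromBpToBq} yields
\[
  \frac{1}{k 2^{\ell}} \, \textrm{vb}_{k 2^{\ell}}(B_p^m, B_q^m) \;\lesssim\; \frac{1}{\alpha} \cdot \frac{\sqrt{\min\{ p,\, \log(2m / (k 2^{\ell})) \}}}{(k 2^{\ell})^{1 - \alpha}} \;\le\; \frac{1}{\alpha} \cdot \frac{\sqrt{\min\{ p,\, \log(2m/k) \}}}{(k 2^{\ell})^{1 - \alpha}},
\]
where the last step uses $k 2^{\ell} \ge k$ and monotonicity of the logarithm. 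Since $1 - \alpha \ge \frac12$, the series $\sum_{\ell \ge 1} (k 2^{\ell})^{-(1 - \alpha)} = k^{-(1-\alpha)} \sum_{\ell \ge 1} 2^{-\ell(1-\alpha)}$ is at most a universal constant times $k^{-(1-\alpha)}$, so these indices contribute $\lesssim \frac{1}{\alpha} \sqrt{\min\{ p,\, \log(2m/k) \}} \, k^{-(1-\alpha)}$, which is exactly the target since $k^{1-\alpha} = k^{1/2 + 1/p - 1/q}$. For the indices with $k 2^{\ell} > m$, I would instead use $\textrm{vb}_{k 2^{\ell}}(B_p^m, B_q^m) \le 2\, \textrm{vb}_m(B_p^m, B_q^m)$ from~\cite{LSV1986} together with Theorem~\ref{thm:VectorBalancingFromBpToBq} applied with $n = m$ (noting $\log(2m/m) = \log 2 < 2 \le p$), giving $\textrm{vb}_{k 2^{\ell}}(B_p^m, B_q^m) \lesssim \frac{1}{\alpha} m^{\alpha}$; summing the geometric series $\sum_{\ell :\, k 2^{\ell} > m} \frac{1}{k 2^{\ell}} \lesssim \frac1m$ then gives a contribution of $\lesssim \frac{1}{\alpha} m^{-(1-\alpha)}$, and since $k \le m$ and $\min\{ p,\, \log(2m/k) \} \ge \log 2$ in this regime, $m^{-(1-\alpha)} \le k^{-(1-\alpha)} \lesssim \sqrt{\min\{ p,\, \log(2m/k) \}}\, k^{-(1-\alpha)}$. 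Adding the two contributions will prove the inequality.

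For the running time, Theorem~\ref{thm:ReductionApxCaraToVB} (with $\bm{z}$ given as a convex combination of at most $m+1$ vectors of $X$) reduces computing $\bm{v}_1, \ldots, \bm{v}_k$ to $O(\log m)$ calls to a coloring oracle for $\textrm{vb}_t(B_p^m, B_q^m)$ with $t \le m+1$, and by Theorem~\ref{thm:VectorBalancingFromBpToBq} each such call costs $O(\log m) \cdot T(m, m+1)$; the total is $O(\log^2 m) \cdot T(m, m+1)$, which is $O(m^{1+\omega} \log^2 m)$ because $T(m, m+1) \le O\big( (m+1)^{1+\omega} + (m+1)^2 m \big) = O(m^{1+\omega})$ (using $\omega \ge 2$).

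Once Theorems~\ref{thm:ReductionApxCaraToVB} and~\ref{thm:VectorBalancingFromBpToBq} are in hand this is essentially just careful bookkeeping of the $\ell$-sum, so I do not expect a serious obstacle. The points that require attention are: the $\ell$-sum must be geometrically convergent, which is precisely why one wants $1 - \alpha$ bounded below (here guaranteed by $p \le q$, i.e.\ $\alpha \le \frac12$); the factor $\sqrt{\min\{ p,\, \log(2m/k) \}}$ must be pulled out of the sum uniformly in $\ell$; and the tail indices $k 2^{\ell} > m$, where Theorem~\ref{thm:VectorBalancingFromBpToBq} does not apply directly, must be absorbed through the $\textrm{vb}_t \le 2\, \textrm{vb}_m$ estimate without destroying the $\sqrt{\log(2m/k)}$ savings. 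All the genuine content lies in Theorems~\ref{thm:ReductionApxCaraToVB} and~\ref{thm:VectorBalancingFromBpToBq}.
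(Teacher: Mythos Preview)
Your proposal is correct and follows the same approach as the paper: apply Theorem~\ref{thm:ReductionApxCaraToVB} and bound each $\textrm{vb}_{k2^{\ell}}(B_p^m,B_q^m)$ via Theorem~\ref{thm:VectorBalancingFromBpToBq}, then observe that because the exponent $1-\alpha = \tfrac12 + \tfrac1p - \tfrac1q \ge \tfrac12$ the resulting series is geometrically dominated by its first term. You are in fact more careful than the paper on one point: the paper applies Theorem~\ref{thm:VectorBalancingFromBpToBq} to every summand without comment, whereas you explicitly split off the tail $k2^{\ell} > m$ (where that theorem's hypothesis $n\le m$ fails) and control it via the \cite{LSV1986} inequality $\textrm{vb}_t \le 2\,\textrm{vb}_m$; this is a legitimate refinement of the same argument, not a different method.
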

To the best of our knowledge this is the first approximate Carath\'eodory bound for pairs of different $\| \cdot \|_p$-norms.
In particular, when $p = q$ this improves upon the $O(\sqrt{\frac{p}{k}})$ bound in ~\cite{MirrokniLVW_ICML2017}
whenever $p \ll \log(\frac{2m}{k})$:

\begin{corollary}
 Let $2 \leq p \leq \infty$ and $k \in \setN$. Then 
  \[
\textrm{ac}_k(B_p^m,B_p^m) \lesssim \sqrt{ \frac{\min\{ p, \log (\frac{2m}{k}) \}}{k}}
\]
The vectors $\bm{v}_1,\ldots,\bm{v}_k$ can be found in time  $O(\log^2 m) \cdot T(m,m+1) \leq O(m^{1+\omega}\log^2(m))$ assuming we are given $\bm{z}$
 as a convex combination of at most $m+1$ vectors in $X$.
\end{corollary}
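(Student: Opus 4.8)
The plan is short because this Corollary is nothing more than the diagonal case $q = p$ of Theorem~\ref{ApxCaraForLpLq}. Substituting $q = p$ there, the quantity $\frac12 - \frac1p + \frac1q$ becomes $\frac12$, so the leading factor $\frac{1}{\frac12-\frac1p+\frac1q}$ equals $2$, and the exponent of $k$ becomes $\frac12 + \frac1p - \frac1q = \frac12$. Hence Theorem~\ref{ApxCaraForLpLq} directly yields
\[
  \textrm{ac}_k(B_p^m,B_p^m) \;\le\; 2\cdot \frac{\sqrt{\min\{p,\log(2m/k)\}}}{\sqrt{k}} \;=\; \sqrt{\frac{\min\{p,\log(2m/k)\}}{k}},
\]
after absorbing the constant $2$ into $\lesssim$, and the running-time statement is inherited verbatim. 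So the Corollary has no independent content; below I instead sketch the proof of Theorem~\ref{ApxCaraForLpLq}, from which it follows.

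For Theorem~\ref{ApxCaraForLpLq} I would simply feed the vector-balancing estimate of Theorem~\ref{thm:VectorBalancingFromBpToBq} into the reduction of Theorem~\ref{thm:ReductionApxCaraToVB}. The latter gives
\[
  \textrm{ac}_k(B_p^m,B_q^m) \;\le\; 4\sum_{\ell\ge 1}\frac{1}{k2^\ell}\,\textrm{vb}_{k2^\ell}(B_p^m,B_q^m).
\]
For each index $\ell$ with $k2^\ell \le m$ I would plug in Theorem~\ref{thm:VectorBalancingFromBpToBq} with $n = k2^\ell$, so that the $\ell$-th summand is
\[
  \frac{1}{k2^\ell}\,\textrm{vb}_{k2^\ell}(B_p^m,B_q^m) \;\lesssim\; \frac{1}{\frac12-\frac1p+\frac1q}\cdot (k2^\ell)^{-(1/2+1/p-1/q)}\sqrt{\min\{p,\log(2m/(k2^\ell))\}}.
\]
Because $2\le p\le q$ forces $\frac12+\frac1p-\frac1q\ge\frac12$, the factor $(k2^\ell)^{-(1/2+1/p-1/q)}$ decreases geometrically in $\ell$ with ratio at most $2^{-1/2}$, while the $\sqrt{\min\{p,\log(2m/(k2^\ell))\}}$ factor is non-increasing in $\ell$ and bounded by $\sqrt{\min\{p,\log(2m/k)\}}$. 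Hence the sum over these $\ell$ is, up to a universal constant, its $\ell=1$ term, which is exactly the claimed bound $\frac{1}{\frac12-\frac1p+\frac1q}\cdot k^{-(1/2+1/p-1/q)}\sqrt{\min\{p,\log(2m/k)\}}$.

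It remains to control the (at most $O(1)$, and possibly all) indices $\ell$ with $k2^\ell>m$. There I would invoke $\textrm{vb}_{k2^\ell}(B_p^m,B_q^m)\le 2\,\textrm{vb}_m(B_p^m,B_q^m)$ from~\cite{LSV1986} together with $\frac{1}{k2^\ell}$ decaying geometrically, and then apply Theorem~\ref{thm:VectorBalancingFromBpToBq} at $n=m$ (where $\log(2m/n)=\log 2$ is an absolute constant) to see that this tail is $\lesssim \frac{1}{\frac12-\frac1p+\frac1q}\,m^{-(1/2+1/p-1/q)}$, which is $\le \frac{1}{\frac12-\frac1p+\frac1q}\,k^{-(1/2+1/p-1/q)}$ when $k\le m$ and is dominated by the analogous expression when $k\ge m$. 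The one place that needs care --- and the only mild obstacle --- is the bookkeeping of this geometric sum against the $\min\{p,\log(2m/(k2^\ell))\}$ term while keeping the hidden constant universal: this works precisely because the geometric ratio is bounded away from $1$ uniformly (via $\frac12+\frac1p-\frac1q\ge\frac12$) whereas the surviving prefactor is exactly the $\frac{1}{\frac12-\frac1p+\frac1q}$ in the statement. Finally, the running time is the $O(\log m)$ oracle calls from Theorem~\ref{thm:ReductionApxCaraToVB}, each on at most $m+1$ vectors, times the $O(\log n)\cdot T(m,n)$ cost of Theorem~\ref{thm:VectorBalancingFromBpToBq} with $n\le m+1$, i.e.\ $O(\log^2 m)\cdot T(m,m+1)$, and $T(m,m+1)\le O(m^{1+\omega})$ by the running-time clause of Theorem~\ref{thm:LovettMekaRephrased}.
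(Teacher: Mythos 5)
Your proposal is correct and matches the paper's approach: the Corollary is indeed just the diagonal case $q=p$ of Theorem~\ref{ApxCaraForLpLq}, and your sketch of that theorem's proof (feeding Theorem~\ref{thm:VectorBalancingFromBpToBq} into Theorem~\ref{thm:ReductionApxCaraToVB} and noting the geometric sum is dominated by its first term since the exponent is at least $1/2$) is exactly what the paper does. Your extra care with the tail indices $k2^\ell > m$ via the $\textrm{vb}_t \le 2\,\textrm{vb}_m$ bound from \cite{LSV1986} is a detail the paper glosses over but mentions earlier, so nothing is genuinely different.
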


Finally, we show that the bound in Theorem~\ref{thm:VectorBalancingFromBpToBq} is tight up to a factor of $\frac{1}{2} - \frac{1}{p} + \frac{1}{q}$:

\begin{theorem}\label{thm:VBLowerBound}
Let $2 \le p \le q \le \infty$ and $n \le m \le 2^n$. Then 
  \[
  \textrm{vb}_{n}(B_p^m,B_q^m) \gtrsim \sqrt{\min\Big\{ p, \log \Big(\frac{2m}{n}\Big) \Big\}} \cdot n^{1/2-1/p+1/q}.
\]
\end{theorem}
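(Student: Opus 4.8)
We need a lower bound on the vector balancing constant, i.e. we must exhibit vectors $\bm{v}_1,\dots,\bm{v}_n \in B_p^m$ such that every signing $\bm{x} \in \{-1,1\}^n$ leaves $\|\sum_i x_i \bm{v}_i\|_q$ large. The plan is to split into the two regimes governed by the $\min\{p,\log(2m/n)\}$ term and handle each with an explicit construction.

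First consider the regime $p \le \log(2m/n)$, where the target bound is $\sqrt{p}\cdot n^{1/2-1/p+1/q}$. Here I would use a ``random $\pm 1$ matrix'' style construction: take $m' := n^{O(1)}$ (or just $m' = $ a suitable number $\le m$) rows and let the columns $\bm{v}_1,\dots,\bm{v}_n$ be (scaled) i.i.d.\ random $\pm 1$ vectors, normalized so that each lies in $B_p^m$; since a $\pm 1$ vector in $\setR^{m'}$ has $\ell_p$-norm $(m')^{1/p}$, we scale by $(m')^{-1/p}$. For a fixed signing $\bm{x}$, the vector $\sum_i x_i \bm{v}_i$ has i.i.d.\ coordinates each of which is a sum of $n$ scaled Rademachers, so each coordinate is $\Theta((m')^{-1/p}\sqrt{n})$ in typical magnitude and the $\ell_q$-norm concentrates around $(m')^{1/q}(m')^{-1/p}\sqrt{n} = (m')^{1/q-1/p}\sqrt{n}$. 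A union bound over the $2^n$ signings succeeds provided $m' \gtrsim $ something like $e^{\Theta(n/\text{(stuff)})}$; the precise choice of $m'$ is what converts the $\sqrt{n}$ into $\sqrt{p}\cdot n^{1/2-1/p+1/q}$ — roughly one picks $m'$ as large as the constraint $m' \le m$ and the union-bound constraint allow, and the $p \le \log(2m/n)$ hypothesis is exactly what guarantees there is enough room. An alternative to the random construction is a direct ``tensor/Hadamard'' construction, but the probabilistic one is cleanest.

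Next consider the regime $p \ge \log(2m/n)$, where the target is $\sqrt{\log(2m/n)}\cdot n^{1/2-1/p+1/q}$. Morally this should follow by ``restricting'' the previous argument to $m' := $ roughly $(2m/n)^{\Theta(1)}$ coordinates — few enough that the union bound over $2^n$ signings still works but now with $\log m'$ playing the role of $p$ — while keeping the columns in $B_p^m$. Concretely one again takes scaled random $\pm 1$ columns supported on $m'$ coordinates; the $\ell_p$-normalization costs $(m')^{-1/p}$, the $\ell_q$-norm of a typical signed sum is $(m')^{1/q-1/p}\sqrt{n}$, and choosing $m'$ so that $\log m' \asymp n/(\text{cost of the Chernoff bound})$ yields the claimed bound with $\log(2m/n)$ in place of $p$. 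The hypothesis $m \le 2^n$ ensures $\log(2m/n) \le n$ so these constructions are non-degenerate. It is likely cleanest to present a single construction parametrized by a number $r := \min\{p,\log(2m/n)\}$ of ``active'' coordinates (or blocks) and do the two Chernoff/union-bound estimates in one stroke.

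The main obstacle is calibrating the parameters so that the two competing constraints — (i) the union bound over $2^n$ signings must beat the probability that $\|\sum_i x_i\bm{v}_i\|_q$ is small, and (ii) the number of active coordinates must not exceed $m$ — simultaneously yield the exponent $1/2 - 1/p + 1/q$ and the factor $\sqrt{\min\{p,\log(2m/n)\}}$ rather than something weaker. The delicate point is that for $\ell_q$ with $q$ large, $\|\sum_i x_i\bm{v}_i\|_q$ being small is governed not by all coordinates simultaneously shrinking but by a lower-tail anti-concentration for the $q$-th moment, so one must argue that with the right scaling even the best signing cannot make the $q$-norm drop below the stated threshold; handling $q = \infty$ (where $\|\cdot\|_q$ is a max) as a limiting/separate case, and making sure the constants survive the $\frac{1}{2}-\frac{1}{p}+\frac{1}{q}$ factor being absorbed into ``$\gtrsim$'', is where the care lies.
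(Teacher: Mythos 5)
Your construction coincides with the paper's --- a random $\pm 1$ matrix scaled by $(m')^{-1/p}$, with the number of rows capped at roughly $\min\{m, 2^{p} n\}$ and padded with zeros --- and your regime split ($p$ versus $\log(2m/n)$) is the same split the paper makes ($m$ versus $2^p n$). However, the central quantitative step is missing, and the heuristic you offer in its place would not close the argument. A union bound over the $2^n$ signings requires, for each fixed $\bm{x} \in \{-1,1\}^n$, that $\Pr\big[\|\sum_i x_i \bm{v}_i\|_q \le T\big] < 2^{-n}$ for the threshold $T$ you wish to certify. You propose $T \approx (m')^{1/q-1/p}\sqrt{n}$, the typical value of the norm; but the probability of falling below a typical value is $\Theta(1)$, not exponentially small, so the union bound fails at that threshold. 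Worse, no choice of $m'$ makes $(m')^{1/q-1/p}\sqrt{n}$ produce the factor $\sqrt{\min\{p,\log(2m/n)\}}$: since $1/q - 1/p \le 0$, this expression is maximized at $m'=n$, where it gives only $n^{1/2-1/p+1/q}$ with no logarithmic gain.

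The missing idea is that the $\sqrt{\log(2m/n)}$ factor comes from large-deviation rows, not from the bulk of the coordinates. The paper sets $\lambda := \sqrt{\tfrac{2}{9}\log(\tfrac{m}{2n})}$ and uses the reverse Chernoff bound (Lemma~\ref{lem:AntiChernoff}) to show that each row satisfies $\langle \bm{B}_i,\bm{x}\rangle \ge \lambda\sqrt{n}$ with probability at least $2n/m$; hence the number $N_{\bm{x}}$ of such rows has expectation at least $2n$, and the \emph{standard} Chernoff bound applied to $N_{\bm{x}}$ (a sum of independent indicators over rows) gives $\Pr[N_{\bm{x}} \le n/5] < 2^{-n}$, which is what beats the union bound. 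This yields $\|\bm{Bx}\|_q \ge (n/5)^{1/q}\,\lambda\sqrt{n}$ for every signing --- note the factor $n^{1/q}$ rather than $(m')^{1/q}$, together with the extra $\lambda \asymp \sqrt{\log(2m/n)}$ --- and after the $m^{-1/p}$ normalization and the truncation $m \le 2^p n$ this is exactly the claimed bound. You correctly flag the lower-tail calibration as ``where the care lies,'' but the proposal does not contain the device (a high threshold $\lambda\sqrt{n}$ chosen via anti-concentration so that $\Theta(n)$ rows exceed it, followed by Chernoff on the count) that resolves it, so as written there is a genuine gap.
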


\section{Preliminaries}

In this section we review a few facts that we later rely on. Let $S^{m-1} := \{ \bm{x} \in \setR^m \mid \|\bm{x}\|_2 = 1\}$ be the sphere.

\paragraph{Convex functions.} Recall the following well known fact:
\begin{lemma}[Jensen Inequality for convave functions] \label{lem:JensenForConcave}
  Let $X$ be any $\setR$-valued random variable and let $F : \setR \to \setR$ be a concave function, then
  $F(\E[X]) \geq \E[F(X)]$.
\end{lemma}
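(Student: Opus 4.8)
The plan is to use the standard supporting-line argument for concave functions. Since $F:\setR\to\setR$ is concave and finite everywhere, it is continuous, and at every point it admits a supporting (affine) line from above. First I would set $\mu := \E[X]$ (assuming, as is implicit, that $X$ and $F(X)$ are integrable), and invoke the fact that there exists a slope $c \in \setR$ with
\[
  F(x) \leq F(\mu) + c\,(x-\mu) \qquad \text{for all } x \in \setR.
\]
Such a $c$ can be taken to be any element of $[\,F'_+(\mu), F'_-(\mu)\,]$, where $F'_-$ and $F'_+$ are the (existing, by concavity) one-sided derivatives at $\mu$; concavity guarantees this affine function dominates $F$ globally.

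Next I would substitute the random variable $X$ into this inequality, obtaining $F(X) \leq F(\mu) + c\,(X-\mu)$ pointwise (almost surely), and then take expectations on both sides. By linearity of expectation the right-hand side becomes $F(\mu) + c\,(\E[X]-\mu) = F(\mu)$ since $\E[X] = \mu$. Hence $\E[F(X)] \leq F(\mu) = F(\E[X])$, which is exactly the claimed inequality.

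Alternatively — and perhaps cleaner if the paper already wants to cite the convex version — I would simply apply the classical Jensen inequality for convex functions to the convex function $G := -F$, giving $G(\E[X]) \leq \E[G(X)]$, and negate. The only mild point to be careful about is integrability of $F(X)$ and the existence of the supporting line, but for a concave function finite on all of $\setR$ both are standard and cause no obstacle; there is really no hard step here, as this is a textbook fact recorded only for later reference.
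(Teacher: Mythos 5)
Your argument is correct: the supporting-line (subgradient) proof is the standard one, and the reduction to the convex case via $G:=-F$ is equally fine. The paper records this lemma as a well-known fact without proof, so there is nothing to compare against; your write-up is a complete and appropriate justification.
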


\paragraph{Estimates on $\| \cdot \|_p$ norms.} It will be useful to understand how the
norm $\|\bm{z}\|_p$ of a vector can change depending on $p \in [1,\infty]$.
\begin{lemma} \label{lem:LpVsLq}
For any $\bm{z} \in \setR^m$ and $1 \leq p \leq q \leq \infty$ one has $\|\bm{z}\|_q \leq \|\bm{z}\|_p \leq m^{1/p-1/q}\|\bm{z}\|_q$.
\end{lemma}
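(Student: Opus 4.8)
The plan is to prove the two inequalities separately, both by elementary means, handling the $q=\infty$ case by hand and reducing to the finite case otherwise. The statement is invariant under scaling $\bm z$, so in each part I may normalize as convenient, and the case $\bm z = \bm 0$ is trivial and can be set aside.

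For the left inequality $\|\bm z\|_q \le \|\bm z\|_p$, I would assume $\bm z \neq \bm 0$ and rescale so that $\|\bm z\|_p = 1$. Then $|z_i| \le 1$ for every coordinate $i$, so when $q < \infty$ we have $|z_i|^q \le |z_i|^p$ (the map $t \mapsto t^s$ on $[0,1]$ decreases as $s$ grows), and summing yields $\sum_i |z_i|^q \le \sum_i |z_i|^p = 1$, i.e.\ $\|\bm z\|_q \le 1 = \|\bm z\|_p$. For $q = \infty$ the same normalization gives directly $\|\bm z\|_\infty = \max_i |z_i| \le 1$.

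For the right inequality $\|\bm z\|_p \le m^{1/p-1/q}\|\bm z\|_q$, first dispose of $p = q$ (trivial). When $p < q < \infty$, I would apply H\"older's inequality to the product $\sum_i |z_i|^p \cdot 1$ with the conjugate exponents $\tfrac{q}{p}$ and $\tfrac{q}{q-p}$:
\[
\sum_{i=1}^m |z_i|^p \;\le\; \Big(\sum_{i=1}^m |z_i|^q\Big)^{p/q}\Big(\sum_{i=1}^m 1\Big)^{(q-p)/q} \;=\; \|\bm z\|_q^{\,p}\cdot m^{(q-p)/q}.
\]
Taking $p$-th roots and simplifying the exponent $\tfrac{q-p}{pq} = \tfrac1p - \tfrac1q$ gives the claim. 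When $q = \infty$, I would instead bound $\|\bm z\|_p^p = \sum_i |z_i|^p \le m\,\|\bm z\|_\infty^p$ directly and take $p$-th roots, using the convention $1/q = 0$ so that $m^{1/p-1/q} = m^{1/p}$.

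I do not anticipate any genuine obstacle: the argument is routine. The only points that require mild care are the degenerate cases ($\bm z = \bm 0$, $p = q$, $q = \infty$) and correctly identifying $\tfrac{q}{q-p}$ as the H\"older conjugate of $\tfrac{q}{p}$ in the second step.
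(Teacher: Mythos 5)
Your proof is correct and complete; the normalization argument for $\|\bm{z}\|_q \le \|\bm{z}\|_p$ and the H\"older step with conjugate exponents $\tfrac{q}{p}$ and $\tfrac{q}{q-p}$ for the other direction are exactly the standard route. The paper states this lemma as a known fact without proof, so there is no alternative argument to compare against.
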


\begin{lemma} \label{lem:LqVsLpAndLinfinity}
  For any $\bm{z} \in \setR^m$ and $1 \leq p \leq q \leq \infty$, we have $\|\bm{z}\|_q^q \leq \|\bm{z}\|_p^p \cdot \|\bm{z}\|_{\infty}^{q-p}$.
\end{lemma}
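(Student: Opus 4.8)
This is an elementary pointwise estimate, so the plan is a direct one-line computation after disposing of the boundary case. First I would assume $q < \infty$ (for $q = \infty$ the statement is degenerate and holds trivially, interpreting $\|\bm{z}\|_\infty^{q-p}$ as the relevant limiting factor; in fact all the later applications of this lemma use it with finite $q$). Writing $\bm{z} = (z_1,\ldots,z_m)$, I would expand the $\ell_q$-norm coordinatewise as $\|\bm{z}\|_q^q = \sum_{j=1}^m |z_j|^q$ and split each term as $|z_j|^q = |z_j|^p \cdot |z_j|^{q-p}$, which is legitimate since $q \geq p$ so the exponent $q-p$ is nonnegative.

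The key step is the bound $|z_j|^{q-p} \leq \|\bm{z}\|_\infty^{q-p}$, valid for every $j$ because $|z_j| \leq \|\bm{z}\|_\infty = \max_i |z_i|$ and $t \mapsto t^{q-p}$ is nondecreasing on $[0,\infty)$. Substituting this into the sum pulls the common factor $\|\bm{z}\|_\infty^{q-p}$ out:
\[
\|\bm{z}\|_q^q = \sum_{j=1}^m |z_j|^p |z_j|^{q-p} \leq \|\bm{z}\|_\infty^{q-p} \sum_{j=1}^m |z_j|^p = \|\bm{z}\|_p^p \cdot \|\bm{z}\|_\infty^{q-p},
\]
which is exactly the claimed inequality.

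There is essentially no obstacle here; the only point requiring a word of care is the edge case $q = \infty$ (and, trivially, $\bm{z} = \bm{0}$, where both sides vanish and one should not divide by anything). If one prefers to avoid treating $q=\infty$ by a limiting convention, the lemma can simply be stated and used for $1 \leq p \leq q < \infty$, since that is all that is needed downstream. I would keep the write-up to the two displayed lines above plus the sentence justifying the monotonicity bound.
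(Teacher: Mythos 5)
Your proof is correct: the coordinatewise split $|z_j|^q = |z_j|^p|z_j|^{q-p} \leq |z_j|^p\|\bm{z}\|_{\infty}^{q-p}$ followed by summing is exactly the standard argument, and the paper itself states this lemma in the preliminaries without proof, so there is nothing to diverge from. Your remark on the $q=\infty$ edge case is also apt, since the downstream use in Theorem~\ref{thm:LqBoundForLpBoundedColumns} only needs the inequality in the form $\|\bm{z}\|_q \leq (\|\bm{z}\|_p^p\|\bm{z}\|_{\infty}^{q-p})^{1/q}$, which degenerates to the trivial $\|\bm{z}\|_{\infty}\leq\|\bm{z}\|_{\infty}$ as $q\to\infty$.
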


\paragraph{The subgaussian norm.}

We introduce a concept from probability theory that is extremely useful and convenient
when dealing with random variables that have Gaussian-type tails.
For a random variable $X \in \setR$ we define the \emph{subgaussian norm} as
\[
 \|X\|_{\psi_2} := \inf\Big\{ s > 0 : \E\Big[\exp\Big(\frac{X^2}{s^2}\Big)\Big] \leq 2\Big\}
\]
One may think of $\|X\|_{\psi_2}$ as the minimum number so that the tail of $X$
is dominated by the Gaussian $N(0,\|X\|_{\psi_2}^2)$. For example if $X \sim N(0,t^2)$
then $\|X\|_{\psi_2} = \Theta(t)$ and also if $X \sim \{ -t,t\}$ uniformly, then  $\|X\|_{\psi_2} = \Theta(t)$.
It may not be obvious but indeed $\| \cdot \|_{\psi_2}$ is a norm on the space
of jointly distributed random variables, i.e. $\|t X\|_{\psi_2} = |t| \cdot \|X\|_{\psi_2}$ and $\|X_1 + X_2\|_{\psi_2} \leq \|X_1\|_{\psi_2} + \|X_2\|_{\psi_2}$ for jointly distributed random variables $X_1,X_2$ (even if they are dependent).
We will use in particular the following properties:
\begin{lemma} \label{lem:PropertiesOfSubgaussianNorm}
  The subgaussian norm satisfies the following:
  \begin{enumerate}
  \item[(A)] For any real random variable $X$ and any $p \geq 1$ one has $\E[|X|^p]^{1/p} \lesssim \sqrt{p} \cdot \|X\|_{\psi_2}$.
  \item[(B)] If $X_1,\ldots,X_N$ are independent real mean-zero random variables\footnote{The argument also works in the Martingale setting. Suppose for any condititioning on $X_1,\ldots,X_{i-1}$ one has $\|X_i\|_{\psi_2} \leq L_i$. Then $\|X_1 + \cdots + X_N\|_{\psi_2} \lesssim (\sum_{i=1}^N L_i^2)^{1/2}$.}, then
    \[
    \|X_1 + \cdots + X_N\|_{\psi_2} \lesssim \Big(\sum_{i=1}^N \|X_i\|_{\psi_2}^2\Big)^{1/2}
    \]
  \item[(C)] For $\bm{a} \in \setR^m$ and $\bm{x} \sim S^{m-1}$ uniformly one has $\|\left<\bm{a},\bm{x}\right>\|_{\psi_2} \lesssim \frac{1}{\sqrt{m}} \|\bm{a}\|_2$.
  \item[(D)] For any real random variable $X$ with $\E[X] = 0$ and any $\lambda \geq 0$ one has \[\Pr[|X| \geq \lambda \|X\|_{\psi_2}] \leq 2e^{-C\lambda^2},\] where $C>0$ is a universal constant.
  \end{enumerate}
\end{lemma}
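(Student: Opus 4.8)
The plan is to establish (A)--(D) in roughly increasing order of difficulty, reusing earlier parts where convenient. First observe that by the $1$-homogeneity of $\|\cdot\|_{\psi_2}$ it suffices throughout to treat the normalized case $\|X\|_{\psi_2}=1$, where $\E[\exp(X^2)]\le 2$ (the infimum in the definition is attained, since $s\mapsto\E[\exp(X^2/s^2)]$ is continuous and nonincreasing). Part (D) is then immediate from Markov's inequality applied to the nonnegative variable $\exp(X^2)$: $\Pr[|X|\ge\lambda]=\Pr[\exp(X^2)\ge e^{\lambda^2}]\le 2e^{-\lambda^2}$, so (D) holds with $C=1$; note the hypothesis $\E[X]=0$ is not actually used. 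For (A), combine the tail bound from (D) with the layer-cake identity $\E[|X|^p]=\int_0^\infty p\,t^{p-1}\Pr[|X|\ge t]\,dt$; the substitution $u=t^2$ turns this into $p\,\Gamma(p/2)$ up to an absolute factor, and $(p\,\Gamma(p/2))^{1/p}\lesssim\sqrt p$ follows from $p^{1/p}=O(1)$ together with the crude estimate $\Gamma(p/2)\le(p/2)^{p/2}$ for $p\ge 2$ (the range $p\le 2$ is even easier, e.g., by monotonicity of $L^p$-norms in $p$).

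For (C), by rotational invariance of the uniform measure on $S^{m-1}$ we may assume $\bm a=\|\bm a\|_2\bm e_1$, so that $\langle\bm a,\bm x\rangle=\|\bm a\|_2\,x_1$ and the claim reduces to $\|x_1\|_{\psi_2}\lesssim 1/\sqrt m$. I would use that the density of $x_1$ on $[-1,1]$ is $c_m(1-t^2)^{(m-3)/2}$ with normalizing constant $c_m\asymp\sqrt m$, and that $(1-t^2)^{(m-3)/2}\le e^{-(m-3)t^2/2}$; plugging this into $\E[\exp(x_1^2/s^2)]$ with $s^2=K/m$ for a sufficiently large absolute constant $K$ bounds the integrand by $c_m\,e^{-\Omega(m)t^2}$, whose integral over $[-1,1]$ is $O(c_m/\sqrt m)=O(1)$, and for $K$ large enough this constant is at most $2$ (the finitely many small values of $m$, where $1/\sqrt m=\Theta(1)$ and $|x_1|\le 1$, are handled trivially). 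It is worth recording the general convenience that if $\E[\exp(X^2/s^2)]\le C$ for some $C\ge 2$ then $\|X\|_{\psi_2}\le s\sqrt{\log_2 C}$: indeed by Jensen (concavity of $t\mapsto t^{1/\alpha^2}$) one has $\E[\exp(X^2/(\alpha s)^2)]\le C^{1/\alpha^2}$, which is $\le 2$ for $\alpha=\sqrt{\log_2 C}$; this lets one be cavalier about the constant $K$ above.

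For (B), the crux is the standard equivalence, up to universal constants, between ``$\|X\|_{\psi_2}\le L$'' and the moment-generating bound ``$\E[\exp(\theta X)]\le\exp(C_0\theta^2 L^2)$ for all $\theta\in\setR$'', valid when $\E[X]=0$. One direction I would prove by expanding $\E[\exp(\theta X)]=1+\sum_{j\ge 2}\theta^j\E[X^j]/j!$ (the $j=1$ term vanishes), bounding $\E[|X|^j]\le(C_1\sqrt j)^j$ via part (A) and $j!\ge(j/e)^j$, so that $\E[X^j]/j!\le(C_1 e/\sqrt j)^j$; the series then sums to $1+O(\theta^2)\le\exp(O(\theta^2))$ for $|\theta|\le\theta_0$, while for $|\theta|>\theta_0$ one uses $\theta X\le\tfrac{\theta^2}{2}+\tfrac{X^2}{2}$ to get $\E[\exp(\theta X)]\le e^{\theta^2/2}\sqrt 2\le\exp(O(\theta^2))$. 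The reverse implication (MGF bound $\Rightarrow$ $\psi_2$ bound) is a short Markov-plus-optimization argument of the same flavor as (D)--(A). Granting the equivalence, (B) is immediate: with $L_i:=\|X_i\|_{\psi_2}$, independence gives $\E[\exp(\theta\sum_i X_i)]=\prod_i\E[\exp(\theta X_i)]\le\exp(C_0\theta^2\sum_i L_i^2)$, whence $\|\sum_i X_i\|_{\psi_2}\lesssim(\sum_i L_i^2)^{1/2}$ by the reverse implication; the martingale variant in the footnote is the identical computation, bounding each factor $\E[\exp(\theta X_i)\mid X_1,\dots,X_{i-1}]$ conditionally and taking iterated expectations.

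The bulk of the work --- and the only mildly technical part --- is proving the $\psi_2\leftrightarrow$ MGF equivalence with honest constants (it is entirely standard and could also simply be cited from a reference text); everything else is a one- or two-line computation once (D), (A) and that equivalence are in hand.
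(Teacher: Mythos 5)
Your proof is correct and follows the standard development (Markov on $\exp(X^2)$ for the tail, layer-cake for moments, the $\psi_2\leftrightarrow$MGF equivalence for sums, and rotation invariance plus the marginal density $c_m(1-t^2)^{(m-3)/2}$ for the sphere), which is exactly the treatment in Vershynin's textbook that the paper cites in lieu of giving its own proof. The auxiliary observation that $\E[\exp(X^2/s^2)]\le C$ implies $\|X\|_{\psi_2}\le s\sqrt{\log_2 C}$ via Jensen is a clean way to absorb constants, and your remark that $\E[X]=0$ is not needed for (D) is accurate.
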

We recommend the excellent textbook of Vershynin~\cite{Vershynin_2018_HighDimProbability} for details.
For the lower bound we will use the following reverse Chernoff bound from~\cite{doi:10.1137/12087222X}:

\begin{lemma}\label{lem:AntiChernoff}
Given independent random variables $x_1, \dots, x_n \sim \{-1,1\}$ and $\lambda \in [3, \sqrt{n}/2]$,
\[\Pr[x_1 + \dots + x_n \ge \lambda\sqrt{n}] \ge \exp(-9\lambda^2/2).\]
\end{lemma}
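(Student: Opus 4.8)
The plan is to prove the reverse tail bound by a direct combinatorial estimate on the binomial distribution. Writing $S_n := x_1 + \cdots + x_n$, every attainable value $v$ (those with $v \equiv n \pmod 2$) satisfies $\Pr[S_n = v] = \binom{n}{(n+v)/2}2^{-n}$, so in principle one could just lower-bound $\Pr[S_n \ge \lambda\sqrt n]$ by the single dominant term $\binom{n}{m_0}2^{-n}$ with $m_0 = \lceil (n+\lambda\sqrt n)/2\rceil$. The subtlety is that Stirling's formula attaches a $\Theta(1/\sqrt n)$ factor to any one such term, which would be fatal in the regime where $\lambda$ is a bounded constant and $n\to\infty$ (there the true probability is $\Omega(1)$, yet a single term tends to $0$). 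The fix is to sum over a window of $\Theta(\sqrt n)$ consecutive attainable values above $\lambda\sqrt n$: the window length then exactly cancels the $1/\sqrt n$, and the exponential factors stay under control because the window is narrow enough that each term's ``entropy'' stays close to that of the first term.

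Concretely, let $a$ be the smallest integer with $a \ge \lambda\sqrt n$ and $a \equiv n \pmod 2$ (so $a \le \lambda\sqrt n + 2$), set $W := \lfloor \sqrt n/4\rfloor + 1$ and $v_j := a + 2(j-1)$, $m_j := (n+v_j)/2$ for $j = 1,\dots,W$. Since each $v_j \ge \lambda\sqrt n$ and the events $\{S_n = v_j\}$ are disjoint,
\[
  \Pr[S_n \ge \lambda\sqrt n] \;\ge\; \sum_{j=1}^{W} \binom{n}{m_j} 2^{-n}.
\]
Writing $m_j/n = \tfrac12 + \delta_j$, the hypotheses $\lambda \le \sqrt n/2$ and $\lambda \ge 3$ (hence $n\ge 36$) force $\delta_j \le \tfrac{\lambda}{2\sqrt n}+\tfrac1{4\sqrt n}+\tfrac1n \le \tfrac13$, so $m_j/n$ stays in $[\tfrac12,\tfrac56]$, bounded away from $0$ and $1$. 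Then I would invoke Stirling in the two-sided form $n! = \sqrt{2\pi n}(n/e)^n e^{r_n}$ with $\tfrac1{12n+1}<r_n<\tfrac1{12n}$ to get $\binom{n}{m_j}2^{-n} \ge \tfrac{c}{\sqrt n}\exp(-n\,D(m_j/n \,\|\, \tfrac12))$ for a universal $c>0$, where $D(p\|\tfrac12) = p\ln(2p)+(1-p)\ln(2(1-p))$. Integrating $f'(\delta) = \ln\frac{1+2\delta}{1-2\delta} \le 8\delta$ over $\delta\in[0,\tfrac13]$ gives $D(\tfrac12+\delta\|\tfrac12) \le 4\delta^2$, whence $n\,D(m_j/n\|\tfrac12) \le 4n\delta_j^2 \le \lambda^2 + \lambda + O(1)$ using $\sqrt n \ge 2\lambda$. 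Summing the $W \ge \sqrt n/4$ equal-sign terms,
\[
  \Pr[S_n \ge \lambda\sqrt n] \;\ge\; \frac{\sqrt n}{4}\cdot\frac{c}{\sqrt n}\,e^{-\lambda^2-\lambda-O(1)} \;=\; c'\,e^{-\lambda^2-\lambda-O(1)},
\]
and it remains to verify $c'\,e^{-\lambda^2-\lambda-O(1)} \ge e^{-9\lambda^2/2}$, i.e. $\tfrac72\lambda^2 - \lambda \ge O(1) - \ln c'$, which holds comfortably for all $\lambda \ge 3$ (the left side is at least $28.5$ there).

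The main obstacle is purely the bookkeeping of absolute constants: one must check that the Stirling remainder $e^{r_n-r_{m_j}-r_{n-m_j}}$ is within a constant of $1$ on the relevant range of $m_j$, that the coefficient in $D(\tfrac12+\delta\|\tfrac12)\le 4\delta^2$ is small enough, and — most delicately — that the window half-width $\sqrt n/4$ is simultaneously large enough to cancel $1/\sqrt n$ and small enough that every $\delta_j$ stays at most $\tfrac13$; this last point is exactly where $\lambda \le \sqrt n/2$ is used. An alternative that sidesteps Stirling is an exponential change of measure: tilt each $x_i$ to have mean $\tanh\theta \approx \lambda/\sqrt n$, note $\tfrac{dP}{dQ} = (\cosh\theta)^n e^{-\theta S_n}$, restrict to the $\Theta(\sqrt n)$-wide event on which $S_n$ concentrates under $Q$ (Chebyshev suffices, since $\Var_Q(S_n) = n/\cosh^2\theta \le n$), and lower-bound the likelihood ratio there; this even yields the stronger $e^{-\lambda^2/2 - O(\lambda)}$, but it needs the same care in sizing the tilted concentration window, so I would present the combinatorial version as the primary argument.
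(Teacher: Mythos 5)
Your proposal is correct. Note that the paper does not actually prove Lemma~\ref{lem:AntiChernoff} at all --- it imports the reverse Chernoff bound as a black box from the cited reference \cite{doi:10.1137/12087222X} --- so your argument is a self-contained replacement rather than a variant of an in-paper proof. The combinatorial route you take (summing $\Theta(\sqrt n)$ binomial terms in a window above $\lambda\sqrt n$ to cancel the $1/\sqrt n$ from Stirling) is the standard way to prove such a lower tail bound, and the details check out: from $3 \le \lambda \le \sqrt n /2$ one gets $n \ge 36$, hence $\delta_j \le \tfrac14 + \tfrac1{24} + \tfrac1{36} < \tfrac13$ and $m_j/n \in [\tfrac12, \tfrac56]$, which keeps $v_W \le n$ (so all window values are attainable) and keeps Stirling's remainders bounded; the inequality $\ln\frac{1+2\delta}{1-2\delta} \le 8\delta$ does hold on $[0,\tfrac13]$ (the difference has nonnegative derivative up to $\delta = 1/(2\sqrt2)$), giving $D(\tfrac12+\delta\,\|\,\tfrac12) \le 4\delta^2$ and $nD \le \lambda^2 + \lambda + O(1)$ with an explicit $O(1)$ of at most about $3$; and the final slack $\tfrac72\lambda^2 - \lambda \ge 28.5$ at $\lambda = 3$ comfortably absorbs the additive constant and the Stirling prefactor. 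The one thing I would ask you to make explicit in a final write-up is the numerical value of the absolute constant hidden in your $e^{-\lambda^2-\lambda-O(1)}$, since the lemma's conclusion has a hard-coded $9/2$ in the exponent rather than an unspecified constant; as computed above this is routine. Your tilting alternative would indeed give the sharper $e^{-\lambda^2/2 - O(\lambda)}$, but the $9/2$ demanded here does not require it.
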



\section{Reduction from Approximate Carath\'eodory to Vector Balancing}

In this section, we prove the reduction of the approximate Carath\'eodory problem to
vector balancing as stated in Theorem~\ref{thm:ReductionApxCaraToVB}. The idea is to
follow the classical approach of \cite{LSV1986}: begin with an arbitrary convex combination
and round the coefficients bit-by-bit. The same basic approach was also followed by
Dadush et al~\cite{BalancingVectorsInAnyNorm-DadushEtAl-FOCS2018}. We prove an auxiliary lemma
that bounds the error when ``doubling the fractionality''.
\begin{lemma} \label{lem:DoubleFractionalityViaVB}
  Let $P,Q \subseteq \setR^m$ be symmetric convex bodies and let $\delta > 0$. 
  Let $\bm{z} = \sum_{i=1}^n \lambda_i \bm{v}_i$ where $\bm{v}_1,\ldots,\bm{v}_n \in P$ and $\bm{\lambda} \in \delta \setZ_{\geq 0}^n$.
  Then there is a vector $\bm{z}' = \sum_{i=1}^n \lambda_i' \bm{v}_i$ where $\bm{\lambda}' \in 2\delta \setZ_{\geq 0}^n$
  so that $\|\bm{z}-\bm{z}'\|_Q \leq \delta \cdot \textrm{vb}_{n}(P,Q)$ and $\sum_{i=1}^n \lambda_i' \leq \sum_{i=1}^n \lambda_i$.
\end{lemma}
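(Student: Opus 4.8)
The plan is the classical ``halve the fractionality'' argument. Write each coefficient as $\lambda_i = \delta c_i$ with $c_i \in \setZ_{\geq 0}$ and let $S := \{ i \in [n] : c_i \text{ is odd}\}$ be the set of ``bad'' indices. If $S = \emptyset$ then every $\lambda_i$ already lies in $2\delta\setZ_{\geq 0}$ and the claim is trivial with $\bm{z}' = \bm{z}$, so assume $S \neq \emptyset$. I would perturb each bad coefficient $\lambda_i$, $i \in S$, by exactly $\pm\delta$ so that it becomes an even multiple of $\delta$, and choose the signs via vector balancing so that the aggregate perturbation is small in $\|\cdot\|_Q$.

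Concretely, first I would apply the definition of $\textrm{vb}_{|S|}(P,Q)$ to the vectors $\{\bm{v}_i\}_{i \in S} \subseteq P$ to obtain a coloring $\bm{x} \in \{-1,1\}^S$ with $\|\sum_{i \in S} x_i \bm{v}_i\|_Q \le \textrm{vb}_{|S|}(P,Q) \le \textrm{vb}_{n}(P,Q)$, where the last inequality holds because $|S| \le n$ and $\bm{0} \in P$ (monotonicity of the vector balancing constant in the number of vectors). Since $\|\cdot\|_Q$ is symmetric, $\bm{x}$ and $-\bm{x}$ yield the same norm, so after possibly replacing $\bm{x}$ by $-\bm{x}$ I may assume $\sum_{i \in S} x_i \le 0$. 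Then I would set $\lambda_i' := \lambda_i + \delta x_i$ for $i \in S$, $\lambda_i' := \lambda_i$ for $i \notin S$, and $\bm{z}' := \sum_{i=1}^n \lambda_i' \bm{v}_i$.

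The verification is then routine. For $i \in S$ the integer $c_i$ is odd and at least $1$, hence $c_i + x_i$ is a nonnegative even integer and $\lambda_i' = \delta(c_i+x_i) \in 2\delta\setZ_{\geq 0}$; for $i \notin S$ the integer $c_i$ is even, so $\lambda_i' = \lambda_i \in 2\delta\setZ_{\geq 0}$. Moreover $\bm{z} - \bm{z}' = -\delta\sum_{i \in S} x_i \bm{v}_i$, so $\|\bm{z} - \bm{z}'\|_Q = \delta\|\sum_{i \in S} x_i \bm{v}_i\|_Q \le \delta \cdot \textrm{vb}_{n}(P,Q)$, and $\sum_{i=1}^n \lambda_i' = \sum_{i=1}^n \lambda_i + \delta\sum_{i \in S} x_i \le \sum_{i=1}^n \lambda_i$.

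I don't expect a genuine obstacle here; the two points to get right are (i) that shifting an odd multiple of $\delta$ by $\pm\delta$ never produces a negative value, which uses precisely that each bad $c_i \ge 1$, and (ii) that the sign-flip symmetry of $\|\cdot\|_Q$ together with monotonicity of $\textrm{vb}$ in the number of vectors lets one simultaneously bound the error by $\delta\cdot\textrm{vb}_{n}(P,Q)$ and prevent the total weight from increasing. Algorithmically, the only non-elementary step is producing the coloring $\bm{x}$, which is exactly a call to whatever procedure realizes $\textrm{vb}_{|S|}(P,Q)$.
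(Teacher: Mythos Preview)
Your proposal is correct and follows essentially the same argument as the paper: both identify the indices whose coefficients are odd multiples of $\delta$, apply the vector balancing constant to those vectors, flip the sign of the coloring if necessary to keep the total weight from increasing, and shift each such coefficient by $\pm\delta$. The only differences are notational (your $S$ and $c_i$ versus the paper's $I$, $a_i$, $b_i$).
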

\begin{proof}
  Write $\lambda_i = 2 \delta a_i + \delta b_i$ with $b_i \in \{ 0,1\}$ and $a_i \in \setZ_{\geq 0}$.
  Let $I := \{ i \in [n] \mid b_i = 1\}$. 
  Now, let $\bm{x} \in \{ -1,1\}^I$ be the
  coloring so that $\|\sum_{i \in I} x_i \bm{v}_i\|_Q \leq \textrm{vb}_{|I|}(P,Q) \leq \textrm{vb}_{n}(P,Q)$.
  We may assume that $\sum_{i \in I} x_i \leq 0$ --- otherwise replace $\bm{x}$ with $-\bm{x}$.
  We may extend the vector to $\bm{x} \in \{ -1,0,1\}^m$ by setting $x_i := 0$ for $i \notin I$.
  We update $\lambda_i' := 2\delta a_i + \delta (1+x_i) b_i \in 2\delta \setZ_{\geq 0}$ for $i \in [n]$.
  Next, we define $\bm{z}' := \sum_{i=1}^n \lambda_i' \bm{v}_i$. Then
  \[
   \|\bm{z}-\bm{z}'\|_Q = \delta \Big\|\sum_{i \in I} x_i \bm{v}_i\Big\|_Q \leq \delta \cdot \textrm{vb}_{n}(P,Q)
 \]
 Note that  $\sum_{i \in I} x_i \leq 0$ implies that  $\sum_{i=1}^n \lambda_i' \leq \sum_{i=1}^n \lambda_i$.
This gives the claim.
\end{proof}

Next, we iteratively apply Lemma~\ref{lem:DoubleFractionalityViaVB} to an initial convex combination until the convex coefficients are multiples of $\frac{1}{k}$. We almost obtain the desired claim, just that the number of vectors might be \emph{less} than $k$. 
\begin{lemma} \label{lem:ReductionApxCaraToVBWithAtMostkVec}
  Let $P,Q \subseteq \setR^m$ be symmetric convex bodies. Then for any $\bm{z} \in \textrm{conv}(X)$ with $X \subseteq P$ and $k \in \setN$ there are $s \in \{ 0,\ldots,k\}$ and $\bm{v}_1,\ldots,\bm{v}_s \in X$ so that
  \[
    \Big\|\bm{z} - \frac{1}{k}\sum_{i=1}^s \bm{v}_i\Big\|_Q \leq \sum_{\ell \geq 1} \frac{2}{k2^{\ell}} \cdot \textrm{vb}_{k2^{\ell}}(P,Q)
  \]
  The vectors can be found in time $O(\log m)$ times the time to find the colorings in $\textrm{vb}_{t}(P,Q)$ where $t \leq m+1$.
\end{lemma}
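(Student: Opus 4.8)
The plan is to follow the classical LSV rounding scheme alluded to above: start from an exact Carath\'eodory representation of $\bm{z}$ with few vectors, round the coefficients onto a dyadic grid with denominator $k\cdot 2^L$, and then apply Lemma~\ref{lem:DoubleFractionalityViaVB} exactly $L$ times to ``double the fractionality'' step by step until every coefficient is a multiple of $\frac1k$. Concretely, by the exact Carath\'eodory theorem we may write $\bm{z} = \sum_{i=1}^n \lambda_i \bm{v}_i$ with $n \le m+1$, all $\bm{v}_i \in X \subseteq P$, $\lambda_i \ge 0$ and $\sum_i \lambda_i = 1$. Set $L := \lceil \log_2(2(m+1)) \rceil = O(\log m)$ and $\delta_\ell := \frac{1}{k 2^\ell}$. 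Round each $\lambda_i$ \emph{down} to the nearest multiple of $\delta_L$, obtaining $\bm{\lambda}^{(L)} \in \delta_L \setZ_{\ge 0}^n$ with $\sum_i \lambda_i^{(L)} \le 1$. Keeping the same vectors $\bm v_1,\dots,\bm v_n$ fixed throughout, we then invoke Lemma~\ref{lem:DoubleFractionalityViaVB} with $\delta = \delta_\ell$ for $\ell = L, L-1, \dots, 1$ in turn, producing coefficient vectors $\bm{\lambda}^{(L)}, \bm{\lambda}^{(L-1)}, \dots, \bm{\lambda}^{(0)}$ with $\bm{\lambda}^{(\ell)} \in \delta_\ell \setZ_{\ge0}^n$ and (by the last assertion of that lemma) the total weight $\sum_i \lambda_i^{(\ell)}$ non-increasing in $\ell$, hence $\le 1$ throughout.

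The quantitative heart of the argument is a small sharpening of Lemma~\ref{lem:DoubleFractionalityViaVB} that is immediate from its proof: when doubling from fractionality $\delta_\ell$, the error is at most $\delta_\ell \cdot \textrm{vb}_{|I|}(P,Q)$, where $I$ is the set of indices whose $\delta_\ell$-bit is odd. Since every $i \in I$ has $\lambda_i^{(\ell)} \ge \delta_\ell$ while $\sum_{i} \lambda_i^{(\ell)} \le 1$, we get $|I| \le 1/\delta_\ell = k 2^\ell$, so by monotonicity of $n \mapsto \textrm{vb}_n(P,Q)$ (which holds because $\bm 0 \in P$) the error at scale $\ell$ is at most $\frac{1}{k2^\ell}\textrm{vb}_{k2^\ell}(P,Q)$. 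This is exactly where we improve over the naive bound $\frac{1}{k}\textrm{vb}_{m+1}(P,Q)$ of Dadush et al.: the set of vectors that can still matter shrinks geometrically as the fractionality does. Summing over $\ell = 1,\dots,L$ and using the triangle inequality gives $\big\|\sum_i \lambda_i^{(L)}\bm v_i - \sum_i\lambda_i^{(0)}\bm v_i\big\|_Q \le \sum_{\ell\ge1}\frac{1}{k2^\ell}\textrm{vb}_{k2^\ell}(P,Q)$.

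It remains to control the initial rounding error and to read off the output. For the former, $\bm z - \sum_i\lambda_i^{(L)}\bm v_i = \delta_L\sum_i g_i \bm v_i$ with $g_i \in [0,1)$, so its $\|\cdot\|_Q$-norm is at most $\delta_L \cdot n \cdot \max_j\|\bm v_j\|_Q \le \delta_L \cdot n \cdot \textrm{vb}_1(P,Q)$, using $\textrm{vb}_1(P,Q) = \sup_{\bm v \in P}\|\bm v\|_Q$; by the choice of $L$ we have $\delta_L n = \frac{n}{k2^L} \le \frac{1}{2k}$, so this is at most $\frac{1}{2k}\textrm{vb}_1(P,Q) \le \frac{1}{2k}\textrm{vb}_{2k}(P,Q)$, i.e. at most the $\ell=1$ term of the series above. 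Adding the two contributions yields the claimed bound $\sum_{\ell\ge1}\frac{2}{k2^\ell}\textrm{vb}_{k2^\ell}(P,Q)$ (which explains the factor $2$). Finally, $\bm z^{(0)} := \sum_i\lambda_i^{(0)}\bm v_i = \frac1k\sum_i (k\lambda_i^{(0)})\bm v_i$ with $k\lambda_i^{(0)}\in\setZ_{\ge0}$ and $\sum_i k\lambda_i^{(0)} \le k$, so listing each $\bm v_i$ with multiplicity $k\lambda_i^{(0)}$ exhibits $s \le k$ vectors from $X$ with $\frac1k\sum_{i=1}^s \bm v_i = \bm z^{(0)}$. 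Algorithmically we perform the (assumed given) Carath\'eodory decomposition, one rounding, and $L = O(\log m)$ calls to a $\textrm{vb}_t$-coloring subroutine with $t \le m+1$.

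I expect the only non-bookkeeping step to be the observation that at scale $\delta_\ell$ at most $k2^\ell$ vectors can be ``active'', which is what lets us invoke $\textrm{vb}_{k2^\ell}$ in place of $\textrm{vb}_{m+1}$; besides that, the argument just requires being careful that the total weight $\sum_i \lambda_i^{(\ell)}$ genuinely never increases (this is precisely the final conclusion of Lemma~\ref{lem:DoubleFractionalityViaVB}, and is what ultimately guarantees $s \le k$) and that $\textrm{vb}_n(P,Q)$ is monotone in $n$.
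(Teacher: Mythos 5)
Your proof is correct and follows essentially the same route as the paper: an exact Carath\'eodory representation with $n\le m+1$ vectors, dyadic rounding of the coefficients, and $L=O(\log m)$ applications of Lemma~\ref{lem:DoubleFractionalityViaVB}, using that at scale $\frac{1}{k2^\ell}$ the support has size at most $k2^\ell$. Your accounting of the initial rounding error (bounding it by the $\ell=1$ term, which is where the factor $2$ comes from) is in fact slightly more explicit than the paper's own treatment.
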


\begin{proof}
  Fix a point $\bm{z} \in \textrm{conv}(X)$ where $X \subseteq P$. Then we can write $\bm{z} = \sum_{i=1}^{n} \lambda_i \bm{v}_i$ where  $n \leq m+1$, $\bm{v}_1,\ldots,\bm{v}_n  \in X$, $\lambda_i \geq 0$ for all $i=1,\ldots,m$ and $\sum_{i=1}^m \lambda_i = 1$. Without loss of generality we may assume that $\bm{\lambda} \in \frac{2^{-L}}{k}\setZ_{\geq 0}^n$
  for some $L \in \setN$. We abbreviate $\bm{z}^{(L)} := \bm{z}$. Now suppose for $\ell \in \{ 0,\ldots,L\}$ the current iterate is $\bm{z}^{(\ell)}$ so that $\bm{z}^{(\ell)} = \sum_{i=1}^n \lambda_i^{(\ell)} \bm{v}_i$
  and $\bm{\lambda}^{(\ell)} \in \frac{2^{-\ell}}{k} \setZ_{\geq 0}^n$.

  Then we apply Lemma~\ref{lem:DoubleFractionalityViaVB} to obtain a vector  $\bm{z}^{(\ell-1)} = \sum_{i=1}^n \lambda_i^{(\ell-1)} \bm{v}_i$
 with $\bm{\lambda}^{(\ell-1)} \in \frac{2^{-(\ell-1)}}{k} \setZ_{\geq 0}^n$ and $\sum_{i=1}^n \lambda_i^{(\ell-1)} \le 1$. Using that  $|\textrm{supp}(\bm{\lambda}^{(\ell)})| \leq k2^{\ell}$, the approximation error satisfies 
  \[
  \big\|\bm{z}^{(\ell)} - \bm{z}^{(\ell-1)}\big\|_Q \leq \frac{1}{2^{\ell}k} \cdot \textrm{vb}_{|\textrm{supp}(\bm{\lambda}^{(\ell)})|}(P,Q) \leq \frac{1}{2^{\ell}k} \cdot \textrm{vb}_{k2^{\ell}}(P,Q)
  \]
  Note that the final iterate is of the form $\bm{z}^{(0)} = \sum_{i=1}^n \lambda^{(0)}_i \bm{v}_i$ with $\lambda_i^{(0)} \in \frac{\setZ_{\geq 0}}{k}$ and $\sum_{i=1}^n \lambda_i^{(0)} \leq 1$. Then for $s := k\sum_{i=1}^n \lambda_i^{(0)} \in \{ 0,\ldots,k\}$,
  let $\bm{u}_1,\ldots,\bm{u}_s$ be a list of vectors that contains $\bm{v}_i$ exactly $k \lambda_i^{(0)} \in \setZ_{\geq 0}$ times. 
  Using the triangle inequality we obtain
  \[
   \Big\|\bm{z}-\frac{1}{k}\sum_{i=1}^s \bm{u}_i\Big\|_Q \leq \sum_{\ell=1}^L \|\bm{z}^{(\ell)}-\bm{z}^{(\ell-1)}\|_Q \leq \sum_{\ell \geq 1} \frac{1}{k2^{\ell}} \cdot \textrm{vb}_{k2^{\ell}}(P,Q)
 \]
 Now let us discuss the running time. First, we can choose $L \leq O(\log m)$ while possibly making a rounding error of $\max\{ \|\bm{y}\|_Q: \bm{y} \in P\} \leq \textrm{vb}_1(P,Q)$, which we absorb by paying an extra factor of 2. Then the running time is dominated by
 the time to find the colorings. Note that we call $\textrm{vb}_t(P,Q)$ only $L$ times for
 parameters $t$ with $t \leq m+1$.
\end{proof}

Now we will use the same trick as Dadush et al~\cite{BalancingVectorsInAnyNorm-DadushEtAl-FOCS2018} in order to obtain exactly $k$ vectors, at the expense of a factor 2 in the approximation error: 
\begin{proof}[Proof of Theorem~\ref{thm:ReductionApxCaraToVB}]
Let  $\bm{z} \in \textrm{conv}(X)$ where $X \subseteq P$.
Fix a vector $\bm{u}_0 \in X$ and write  $X' := \{ \bm{u}-\bm{u}_0 \mid \bm{u} \in X \}$. Note that in particular $\bm{0} \in X'$
and $\bm{z} - \bm{u}_0 \in \textrm{conv}(X')$.
We apply Lemma~\ref{lem:ReductionApxCaraToVBWithAtMostkVec} and obtain vectors $\bm{v}_1,\ldots,\bm{v}_{s} \in X'$ with $s \leq k$ so that
\[
    \Big\|(\bm{z} -\bm{u}_0) - \frac{1}{k}\sum_{i=1}^s \bm{v}_i\Big\|_Q \leq \sum_{\ell \geq 1} \frac{2}{k2^{\ell}} \cdot \textrm{vb}_{k2^{\ell}}(2P,Q)
  \]
  using that $X' \subseteq 2P$.
Since $\bm{0} \in X'$, we can extend this sequence to a list $\bm{v}_1,\ldots,\bm{v}_k$ of $k$ vectors. Each $\bm{v}_i \in X'$
can be written as $\bm{v}_i = \bm{u}_i - \bm{u}_0$ with $\bm{u}_1,\ldots,\bm{u}_k \in X$. Then
\[
 \Big\|\bm{z} - \frac{1}{k} \sum_{i=1}^k \bm{u}_k\Big\|_Q =  \Big\|(\bm{z} -\bm{u}_0) - \frac{1}{k}\sum_{i=1}^k (\bm{u}_k-\bm{u}_0)\Big\|_Q \leq  4\sum_{\ell \geq 1} \frac{1}{k2^{\ell}} \cdot \textrm{vb}_{k2^{\ell}}(P,Q). \qedhere
\]
\end{proof}

\section{Vector balancing from $B_p^m$ to $B_q^m$}
From now on, we focus on the case where $P = B_p^m$ and $Q = B_q^m$.
\subsection{Balancing from $B_p^m$ to $B_p^m$}
Suppose we have a matrix $\bm{A} \in \setR^{m \times n}$ with $\bm{A}^1,\ldots,\bm{A}^n \in B_p^m$.
Then for a uniform random $\bm{x} \in \{ -1,1\}^n$ one has $\E[\|\bm{A}\bm{x}\|_p] \lesssim \sqrt{pn}$. This result
extends to the case where $\bm{x}$ is subgaussian rather than independent.
\begin{lemma} \label{lem:LM-ExpectedLpNorm}
  Let $\bm{A}^1,\ldots,\bm{A}^n \in B_p^m$ for $p \geq 2$ and let $\bm{x}_0 \in [-1,1]^n$.
  Then  \[\E_{\bm{x} \sim \pazocal{D}(\bm{A},\bm{x}_0)}[\|\bm{A}(\bm{x}-\bm{x}_0)\|_p] \lesssim \sqrt{p n}.\]
\end{lemma}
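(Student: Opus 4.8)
The plan is to reduce the $\|\cdot\|_p$-norm to a sum of $p$-th absolute moments of its coordinates, which are exactly the linear forms $\langle \bm{A}_i, \bm{x}-\bm{x}_0\rangle$ in the random vector ($\bm{A}_i \in \setR^n$ denoting the $i$-th row of $\bm{A}$), and then to control each such moment through the subgaussianity guarantee (D) of Theorem~\ref{thm:LovettMekaRephrased} combined with the moment bound of Lemma~\ref{lem:PropertiesOfSubgaussianNorm}(A). In other words, the statement is morally ``sub-Gaussian tails $+$ Jensen'', with property (D) ensuring that each coordinate of $\bm{A}(\bm{x}-\bm{x}_0)$ behaves like it would in the classical i.i.d.\ $\pm 1$ case, for which $\E[\|\bm{A}\bm{x}\|_p] \lesssim \sqrt{pn}$ is standard.

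Concretely, I would write $\bm{y} := \bm{A}(\bm{x}-\bm{x}_0)$, so $y_i = \langle \bm{A}_i,\bm{x}-\bm{x}_0\rangle$. Since $t \mapsto t^{1/p}$ is concave on $\setR_{\geq 0}$, Jensen's inequality (Lemma~\ref{lem:JensenForConcave}) gives
\[
 \E\big[\|\bm{y}\|_p\big] = \E\Big[\Big(\textstyle\sum_{i=1}^m |y_i|^p\Big)^{1/p}\Big] \leq \Big(\textstyle\sum_{i=1}^m \E[|y_i|^p]\Big)^{1/p}.
\]
Next, property (D) of Theorem~\ref{thm:LovettMekaRephrased} yields $\|y_i\|_{\psi_2} \lesssim \|\bm{A}_i\|_2$, and then Lemma~\ref{lem:PropertiesOfSubgaussianNorm}(A) gives $\E[|y_i|^p]^{1/p} \leq C\sqrt{p}\,\|\bm{A}_i\|_2$ for a universal constant $C$, i.e.\ $\E[|y_i|^p] \leq (C\sqrt{p})^p \|\bm{A}_i\|_2^p$. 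Finally, since $p \geq 2$, Lemma~\ref{lem:LpVsLq} applied in $\setR^n$ gives $\|\bm{A}_i\|_2 \leq n^{1/2-1/p}\|\bm{A}_i\|_p$, so
\[
 \textstyle\sum_{i=1}^m \|\bm{A}_i\|_2^p \leq n^{p/2-1}\sum_{i=1}^m \|\bm{A}_i\|_p^p = n^{p/2-1}\sum_{j=1}^n \|\bm{A}^j\|_p^p \leq n^{p/2-1}\cdot n = n^{p/2},
\]
using $\bm{A}^j \in B_p^m$ in the last step. Combining the three displays gives $\E[\|\bm{y}\|_p] \leq C\sqrt{p}\cdot (n^{p/2})^{1/p} = C\sqrt{pn}$, as claimed.

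The only point that needs a bit of care is the interaction between the universal constant hidden in the $\lesssim$ of Lemma~\ref{lem:PropertiesOfSubgaussianNorm}(A) and the fact that one raises the per-coordinate moment estimate to the $p$-th power: because that constant does not depend on $p$, the factor $(C\sqrt p)^p$ collapses to just $C$ after taking $p$-th roots, so there is no exponential blow-up. Beyond that bookkeeping I do not anticipate a genuine obstacle; the argument is short and every ingredient (Jensen, the $L^p$-vs-$\psi_2$ moment comparison, the monotonicity of $\ell_p$-norms, and the Lovett--Meka subgaussianity) is already available.
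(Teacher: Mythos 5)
Your proof is correct and follows essentially the same route as the paper: Jensen's inequality for $t\mapsto t^{1/p}$, then the subgaussian moment bound from Theorem~\ref{thm:LovettMekaRephrased}.(D) and Lemma~\ref{lem:PropertiesOfSubgaussianNorm}.(A), reducing everything to showing $\sum_{i=1}^m\|\bm{A}_i\|_2^p\le n^{p/2}$. The only (harmless) difference is in that last counting step, where you use the row-wise norm comparison $\|\bm{A}_i\|_2\le n^{1/2-1/p}\|\bm{A}_i\|_p$ while the paper applies the triangle inequality in $\|\cdot\|_{p/2}$ to the columns of the entrywise-squared matrix; both give the same bound under the column normalization $\|\bm{A}^j\|_p\le 1$.
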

\begin{proof}
  It will be convenient to abbreviate $\bm{B} \in \setR^{m \times n}$ as the matrix with entries $B_{ij} := A_{ij}^2$.
  Then we have
  \begin{eqnarray*}
    \E_{\bm{x} \sim \pazocal{D}(\bm{A},\bm{x}_0)}[\|\bm{A}(\bm{x}-\bm{x}_0)\|_p] 
                                                              &\stackrel{(*)}{\leq}& \E_{\bm{x} \sim \pazocal{D}(\bm{A},\bm{x}_0)}\big[\|\bm{A}(\bm{x}-\bm{x}_0)\|_p^p\big]^{1/p} \\
                                                             &=& \Big(\sum_{i=1}^m \E_{\bm{x} \sim \pazocal{D}(\bm{A},\bm{x}_0)}\big[|\left<\bm{A}_i,\bm{x}-\bm{x}_0\right>|^p\big] \Big)^{1/p} \\ &\stackrel{\textrm{Thm~\ref{thm:LovettMekaRephrased}.(D)+Lem~\ref{lem:PropertiesOfSubgaussianNorm}.(A)}}{\lesssim}& \Big(\sum_{i=1}^m \big(\sqrt{p} \cdot \|\bm{A}_i\|_2\big)^p \Big)^{1/p} \\
                                                             &=& \sqrt{p} \cdot \sum_{i=1}^m \Big(\sum_{j=1}^n A_{ij}^2\Big)^{p/2} \Big)^{1/p} \\
                                                             &=& \sqrt{p} \cdot \Big( \Big( \sum_{i=1}^m \Big( \sum_{j=1}^n B_{ij} \Big)^{p/2} \Big)^{2/p} \Big)^{1/2}  \\
                                                             &=& \sqrt{p} \cdot \Big( \Big\| \sum_{j=1}^n \bm{B}^j \Big\|_{p/2} \Big)^{1/2} \\ &\stackrel{\textrm{triangle ineq.}}{\leq}& \sqrt{p} \cdot \Big( \sum_{j=1}^n \underbrace{\|\bm{B}^j\|_{p/2}}_{\leq 1} \Big)^{1/2} \stackrel{(**)}{\leq} \sqrt{p} \cdot \sqrt{n}.
  \end{eqnarray*}
 Here we are using Jensen's Inequality (Lemma~\ref{lem:JensenForConcave}) with the concavity of $f(z) = z^{1/p}$ in $(*)$. 
  Finally in $(**)$ we have made use of
  \[
   \|\bm{B}^j\|_{p/2} = \Big(\sum_{i=1}^m (A_{ij}^2)^{p/2}\Big)^{2/p} = \Big( \underbrace{\sum_{i=1}^m |A_{ij}|^p}_{\leq 1} \Big)^{2/p} \leq 1.
  \]
  This shows the claim.
\end{proof}

\subsection{A dimension-free upper bound on $\| \cdot \|_{\infty}$}
Now suppose we have a matrix $\bm{A} \in \setR^{m \times n}$ with $\bm{A}^1,\ldots,\bm{A}^n \in B_p^m$ and we want to
find a coloring $\bm{x}$ with small value of $\|\bm{A}\bm{x}\|_{\infty}$. If we were to take a random coloring
and the matrix happens to contain a row of all-ones, then in expectation one would have $\|\bm{A}\bm{x}\|_{\infty} \gtrsim \sqrt{n}$. It turns out that the Lovett-Meka distribution improves significantly over this bound. 
However, obtaining a bound solely dependent on $n$ is slightly delicate since the Euclidean norm bound of $\|\bm{A}^j\|_2 \leq m^{1/2-1/p}$ on the columns
might be tight. 
\begin{lemma} \label{lem:LM-UpperBoundOnInfinityNormInTermsOfn}
  Let  $p \geq 2$, $\bm{x}_0 \in [-1,1]^n$ and let $\bm{A}^{m \times n}$ be a matrix with $\|\bm{A}^j\|_p \leq 1$ for all $j=1,\ldots,n$.
  Then for any $\bm{x} \sim \pazocal{D}(\bm{A},\bm{x}_0)$ one has $\|\bm{A}(\bm{x}-\bm{x}_0)\|_{\infty} \lesssim \sqrt{p} \cdot n^{1/2-1/p}$.
\end{lemma}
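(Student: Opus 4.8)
The plan is to bound $\|\bm{A}(\bm{x}-\bm{x}_0)\|_{\infty} = \max_{i} |\langle \bm{A}_i, \bm{x}-\bm{x}_0\rangle|$ by combining the subgaussian tail behavior from Theorem~\ref{thm:LovettMekaRephrased}.(D) with a union bound over the $m$ rows, but the difficulty the authors flag is that a naive union bound fails: the only a priori bound on $\|\bm{A}_i\|_2$ is $\|\bm{A}_i\|_2 \le m^{1/2-1/p}$ (coming from $\|\bm{A}_i\|_p \le$ something, via Lemma~\ref{lem:LpVsLq}-type estimates), which would give a bound depending on $m$. The fix is to exploit that the \emph{column} norms are controlled ($\|\bm{A}^j\|_p \le 1$), so the rows cannot all simultaneously have large $\ell_2$-norm. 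Concretely, $\sum_{i=1}^m \|\bm{A}_i\|_2^2 = \sum_{i,j} A_{ij}^2 = \sum_{j=1}^n \|\bm{A}^j\|_2^2 \le n \cdot m^{1-2/p}$ is far too weak; the right quantity to track is $\sum_{i=1}^m \|\bm{A}_i\|_2^p$, which by the same manipulation as in Lemma~\ref{lem:LM-ExpectedLpNorm} equals $\|\sum_j \bm{B}^j\|_{p/2}^{p/2} \le n^{p/2}$ where $B_{ij} = A_{ij}^2$.

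First I would fix a threshold $\lambda$ and estimate $\Pr[|\langle \bm{A}_i, \bm{x}-\bm{x}_0\rangle| \ge t]$ using property (D): since $\|\langle \bm{A}_i,\bm{x}-\bm{x}_0\rangle\|_{\psi_2} \lesssim \|\bm{A}_i\|_2$, we get $\Pr[|\langle \bm{A}_i,\bm{x}-\bm{x}_0\rangle| \ge t] \le 2\exp(-C t^2/\|\bm{A}_i\|_2^2)$. Rather than union-bounding directly, I would bound the $p$-th moment: $\E[\|\bm{A}(\bm{x}-\bm{x}_0)\|_\infty^p] \le \E[\sum_i |\langle \bm{A}_i,\bm{x}-\bm{x}_0\rangle|^p] \le \sum_i (C\sqrt{p}\,\|\bm{A}_i\|_2)^p$ by Lemma~\ref{lem:PropertiesOfSubgaussianNorm}.(A), and then apply the identity above: $\sum_i \|\bm{A}_i\|_2^p = \sum_i (\sum_j A_{ij}^2)^{p/2} = \|\sum_j \bm{B}^j\|_{p/2}^{p/2} \le (\sum_j \|\bm{B}^j\|_{p/2})^{p/2} \le n^{p/2}$, using the triangle inequality in $\ell_{p/2}$ and $\|\bm{B}^j\|_{p/2} = \|\bm{A}^j\|_p^2 \le 1$. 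Taking $p$-th roots gives $\E[\|\bm{A}(\bm{x}-\bm{x}_0)\|_\infty] \le \E[\|\bm{A}(\bm{x}-\bm{x}_0)\|_\infty^p]^{1/p} \lesssim \sqrt{p}\cdot n^{1/2-1/p}$ (the $n^{1/2}$ from the sum bound and an extra $n^{-1/p}$ coming from the fact that we raised $n^{p/2}$ to the $1/p$ power, i.e. $(n^{p/2})^{1/p} = n^{1/2}$ — wait, that gives $n^{1/2}$, so I must instead bound the number of terms more carefully: with $m$ rows, $\E[\|\cdot\|_\infty^p]^{1/p}$ over $m$ items typically loses only a constant, but here we want to convert an $\ell_\infty$ over $m$ coordinates into something dimension-free, so I would actually take $p$ itself as the exponent and note $\E[\max_i |\cdot|^p]^{1/p} \le (\sum_i \E|\cdot|^p)^{1/p} \le (C\sqrt p)(\sum_i \|\bm{A}_i\|_2^p)^{1/p} \le C\sqrt p \cdot n^{1/2}$, which is off by $n^{1/p}$ from the claim).

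So the crucial refinement is that raising to the $p$-th power is \emph{not} wasteful here precisely because $\sum_i \|\bm{A}_i\|_2^p \le n^{p/2}$ already absorbs the dimension $m$; but to get the sharper $n^{1/2-1/p}$ I would instead work with the $\psi_2$-norm of $\|\bm{A}(\bm{x}-\bm{x}_0)\|_\infty$ directly, or equivalently observe that $\max_i |\langle\bm{A}_i,\bm{x}-\bm{x}_0\rangle|^{p}$ has expectation $\le \sum_i \E|\langle\bm{A}_i,\bm{x}-\bm{x}_0\rangle|^p$, and that in fact one should compare $\|\bm{A}(\bm{x}-\bm{x}_0)\|_\infty$ against $\|\bm{A}(\bm{x}-\bm{x}_0)\|_p$ first via Lemma~\ref{lem:LpVsLq}: indeed $\|\bm{A}(\bm{x}-\bm{x}_0)\|_\infty \le \|\bm{A}(\bm{x}-\bm{x}_0)\|_p$ trivially is the wrong direction; rather I want $\|\cdot\|_\infty \le \|\cdot\|_p$ which \emph{is} true, and combined with Lemma~\ref{lem:LM-ExpectedLpNorm} gives $\E\|\bm{A}(\bm{x}-\bm{x}_0)\|_\infty \le \E\|\bm{A}(\bm{x}-\bm{x}_0)\|_p \lesssim \sqrt{pn}$ — still not dimension-free in the exponent of $n$ we want. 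The honest route to the stated $n^{1/2-1/p}$: rescale. Replace $\bm{A}$ by $\bm{A}' := \bm{A}/c$ where we normalize so that the Lovett-Meka discrepancy parameter $\Delta$ for $\|\bm{A}'(\bm{x}-\bm{x}_0)\|_\infty \le \Delta$ via condition (B) is as small as possible. The condition $\sum_i \exp(-C_1\Delta^2/\|\bm{A}_i\|_2^2) \le C_2 n$ combined with $\sum_i \|\bm{A}_i\|_2^p \le n^{p/2}$ and a Markov-type / convexity argument on the function $\exp(-C_1\Delta^2/t)$ is what yields $\Delta \lesssim \sqrt{p}\, n^{1/2-1/p}$: roughly, the number of rows with $\|\bm{A}_i\|_2 \ge \tau$ is at most $n^{p/2}/\tau^p$, so to make $\sum_i \exp(-C_1\Delta^2/\|\bm{A}_i\|_2^2) \le C_2 n$ it suffices that for every dyadic scale $\tau$ the count $n^{p/2}/\tau^p$ times $\exp(-C_1\Delta^2/\tau^2)$ is $\lesssim n$, i.e. $\exp(-C_1\Delta^2/\tau^2) \lesssim \tau^p/n^{p/2-1}$; choosing $\Delta = \Theta(\sqrt{p}\,n^{1/2-1/p})$ makes this hold at the worst scale $\tau \approx n^{1/2-1/p}$ and all smaller ones. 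I expect \textbf{this dyadic bucketing of the rows by their $\ell_2$-norm, together with verifying the Lovett-Meka feasibility condition (B) for $\Delta \lesssim \sqrt p\, n^{1/2-1/p}$, to be the main obstacle}; once (B) is verified the lemma is immediate, and the $\psi_2$-subgaussianity from (D) can be invoked instead if one prefers a moment-based presentation over the union-bound-over-scales presentation.
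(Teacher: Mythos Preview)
Your final approach---verify condition (B) of Theorem~\ref{thm:LovettMekaRephrased} for $\Delta \asymp \sqrt{p}\,n^{1/2-1/p}$ using the bound $\sum_{i=1}^m \|\bm{A}_i\|_2^p \le n^{p/2}$---is exactly what the paper does. The paper derives $\sum_i \|\bm{A}_i\|_2^p \le n^{p/2}$ slightly differently (applying Lemma~\ref{lem:LpVsLq} row-wise rather than via your $\bm{B}$-matrix/triangle-inequality trick, though both are the same computation), and then, instead of dyadic bucketing, it uses the single elementary inequality $\exp(-1/y) \le p^{p/2} y^{p/2}$ to get directly
\[
\sum_{i=1}^m \exp\Big(-\frac{C_1\Delta^2}{\|\bm{A}_i\|_2^2}\Big) \;\le\; \sum_{i=1}^m p^{p/2}\,\frac{\|\bm{A}_i\|_2^p}{(C_1)^{-p/2}\Delta^p} \;\le\; n\Big(\frac{\sqrt{p}\,n^{1/2-1/p}}{\Theta(\Delta)}\Big)^p,
\]
which is $\le C_2 n$ for the stated $\Delta$. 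This replaces your level-set argument with a one-line bound; your dyadic version would also go through once you note the contributions are geometrically decaying on both sides of the critical scale $\tau \approx n^{1/2-1/p}$ (you only mentioned smaller scales).

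One correction: your closing remark that ``the $\psi_2$-subgaussianity from (D) can be invoked instead'' is not right, and you essentially already saw why. The lemma asserts an almost-sure bound (``for any $\bm{x}\sim\pazocal{D}(\bm{A},\bm{x}_0)$''), which is exactly what (B) delivers; (D) only gives moment/tail control, and your own $p$-th moment computation showed that route yields $\sqrt{p}\,n^{1/2}$, off by the factor $n^{1/p}$ and only in expectation. So (B) is essential here, not interchangeable with (D).
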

\begin{proof}
  The goal is to verify that the condition in Theorem~\ref{thm:LovettMekaRephrased}.(B) applies for a value of $\Delta$ that is of the order $\sqrt{p} \cdot n^{1/2-1/p}$. First we convert the bound on the $\| \cdot \|_p$-norm of the columns $\bm{A}^j$ into information about the $\| \cdot \|_2$-norm of the rows $\bm{A}_i$. In particular one has
  \[
  \Big(\frac{1}{n} \sum_{i=1}^m \|\bm{A}_i\|_2^p \Big)^{1/p} \stackrel{\textrm{Lem~\ref{lem:LpVsLq}}}{\leq} n^{1/2-1/p} \cdot \Big(\frac{1}{n} \sum_{i=1}^m \|\bm{A}_i\|_p^p\Big)^{1/p} = n^{1/2-1/p} \Big(\frac{1}{n} \sum_{j=1}^n\|\bm{A}^j\|_p^p\Big)^{1/p} \leq n^{1/2-1/p} \quad (***)
  \]
  Then we use this to estimate that
  \begin{eqnarray*}
  \sum_{i=1}^m \exp\Big(-C_1\frac{\Delta^2}{\|\bm{A}_i\|_2^2}\Big) &\leq& \sum_{i=1}^m p^{p/2} \frac{C_1^{p/2}\|\bm{A}_i\|_2^p}{\Delta^p} \leq n \cdot \Big(\frac{\sqrt{C_1p} n^{1/2-1/p}}{\Delta}\Big)^p \leq C_2n
  \end{eqnarray*}
  if we set $\Delta := \frac{\sqrt{C_1}}{C_2^{1/p}} \sqrt{p} \cdot n^{1/2-1/p}$.
  Here we have used the following estimate:  \\
  {\bf Claim I.} \emph{For $p \geq 1$ and $y>0$ one has $\exp(-\frac{1}{y}) \leq p^{p/2}y^{p/2}$.} \\
  Indeed, for $y > 1$, one has $\exp(-\frac{1}{y}) < 1 < p^{p/2}y^{p/2}$. Since $\exp(\frac{1}{y}) = \sum_{k \in \setN} \frac{1}{k! \cdot y^k} \ge \frac{1}{k! \cdot y^k}$ for any $k \in \setN$, one has for $y \le 1$: 
  \[\exp\Big(-\frac{1}{y}\Big) \le \lceil p/2\rceil! \cdot y^{\lceil p/2\rceil}  \le  \lceil p/2\rceil^{\lfloor p/2 \rfloor} y^{p/2} \le p^{p/2} y^{p/2}. \qedhere \]
\end{proof}

\subsection{Balancing from $B_p^m$ to $B_q^m$}

Now we show how to find partial colorings for balancing vectors in the $B_p^m$-ball
into scalars of the  $B_q^m$-ball using the Lovett-Meka distribution: 
\begin{theorem} \label{thm:LqBoundForLpBoundedColumns}
  Let $2 \leq p \leq q \leq \infty$, $\bm{x}_0 \in [-1,1]^n$ and let $\bm{A} \in \setR^{m \times n}$ with $\bm{A}^1,\ldots,\bm{A}^n \in B_p^m$.
  Then
  \[
    \E_{\bm{x} \sim \pazocal{D}(\bm{A},\bm{x}_0)}\big[\|\bm{A}(\bm{x}-\bm{x}_0)\|_q\big] \lesssim \sqrt{\min\big\{ p, \log(\tfrac{2m}{n})\big\} } \cdot n^{1/2 - 1/p + 1/q}
  \]
\end{theorem}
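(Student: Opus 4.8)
The plan is to interpolate between the two bounds we already have: Lemma~\ref{lem:LM-ExpectedLpNorm} gives $\E[\|\bm{A}(\bm{x}-\bm{x}_0)\|_p] \lesssim \sqrt{pn}$, and Lemma~\ref{lem:LM-UpperBoundOnInfinityNormInTermsOfn} gives $\|\bm{A}(\bm{x}-\bm{x}_0)\|_\infty \lesssim \sqrt{p}\, n^{1/2-1/p}$ almost surely. To pass from $\|\cdot\|_p$ to $\|\cdot\|_q$ I would invoke Lemma~\ref{lem:LqVsLpAndLinfinity}, which says $\|\bm{y}\|_q^q \leq \|\bm{y}\|_p^p \cdot \|\bm{y}\|_\infty^{q-p}$ for any vector $\bm{y}$. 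Writing $\bm{y} := \bm{A}(\bm{x}-\bm{x}_0)$, this reads $\|\bm{y}\|_q \leq \|\bm{y}\|_p^{p/q} \cdot \|\bm{y}\|_\infty^{1-p/q}$. I would then take expectations, first using concavity of $t \mapsto t^{p/q}$ (Jensen, Lemma~\ref{lem:JensenForConcave}) together with the almost-sure $\ell_\infty$ bound to get
\[
\E[\|\bm{y}\|_q] \;\leq\; \E[\|\bm{y}\|_p]^{p/q} \cdot \big(\sqrt{p}\, n^{1/2-1/p}\big)^{1-p/q} \;\lesssim\; (\sqrt{pn})^{p/q} \cdot \big(\sqrt{p}\, n^{1/2-1/p}\big)^{1-p/q}.
\]
Collecting exponents: the $\sqrt{p}$ factors combine to $\sqrt{p}$, and the powers of $n$ are $\frac{1}{2}\cdot\frac{p}{q} + (\frac{1}{2}-\frac{1}{p})(1-\frac{p}{q})$, which simplifies to $\frac{1}{2} - \frac{1}{p} + \frac{1}{q}$ — exactly the claimed exponent. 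This handles the $p \leq \log(2m/n)$ regime (where $\min\{p,\log(2m/n)\}=p$) and in fact gives the slightly stronger $\sqrt{p}$ always.

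The remaining, genuinely harder regime is $\log(2m/n) < p$, where we must replace $\sqrt{p}$ by $\sqrt{\log(2m/n)}$. Here the idea is to \emph{truncate} the exponent: instead of going all the way to $\|\cdot\|_p$, work with an intermediate norm $\|\cdot\|_{p_0}$ for $p_0 := \Theta(\log(2m/n))$ (capped at $p$ so that $2 \leq p_0 \leq p$). Since the columns satisfy $\|\bm{A}^j\|_p \leq 1$ and $p_0 \leq p$, by Lemma~\ref{lem:LpVsLq} we have $\|\bm{A}^j\|_{p_0} \leq m^{1/p_0 - 1/p}$, so after rescaling the matrix by $m^{-(1/p_0-1/p)}$ we may apply Lemma~\ref{lem:LM-ExpectedLpNorm} with parameter $p_0$ to obtain $\E[\|\bm{A}(\bm{x}-\bm{x}_0)\|_{p_0}] \lesssim \sqrt{p_0 n}\cdot m^{1/p_0 - 1/p}$. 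With $p_0 = \Theta(\log(2m/n))$ the factor $m^{1/p_0}$ is $\Theta((2m/n)^{\Theta(1/\log(2m/n))} \cdot n^{1/p_0}) = \Theta(n^{1/p_0})$ up to constants, so this reads $\E[\|\bm{A}(\bm{x}-\bm{x}_0)\|_{p_0}] \lesssim \sqrt{\log(2m/n)}\cdot n^{1/2 - 1/p + 1/p_0}$. I expect choosing the constant in $p_0$ so that $m^{1/p_0}$ stays within a constant of $n^{1/p_0}$ to be the delicate bookkeeping step — one wants $p_0 \geq c\log(2m/n)$ for an appropriate $c$, e.g. $p_0 := \max\{2,\min\{p,\lceil\log(2m/n)\rceil\}\}$ or a constant multiple thereof, and then verify $m^{1/p_0} \le e \cdot n^{1/p_0}$ or similar.

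Finally I would combine the intermediate-$\ell_{p_0}$ bound with the $\ell_\infty$ bound from Lemma~\ref{lem:LM-UpperBoundOnInfinityNormInTermsOfn} via Lemma~\ref{lem:LqVsLpAndLinfinity} exactly as in the first paragraph but now with $p_0$ in place of $p$: $\|\bm{y}\|_q \leq \|\bm{y}\|_{p_0}^{p_0/q}\|\bm{y}\|_\infty^{1-p_0/q}$, take expectations using Jensen and the a.s.\ $\ell_\infty$ bound, and check that the exponents of $n$ telescope to $\frac{1}{2}-\frac{1}{p}+\frac{1}{q}$ again (the $-1/p$ terms from the two pieces, weighted by $p_0/q$ and $1-p_0/q$, and the $+1/p_0$ from the $\ell_{p_0}$ bound, all recombine). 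For the $\sqrt{p}$ factor inside the a.s.\ $\ell_\infty$ bound I would note that one can also run Lemma~\ref{lem:LM-UpperBoundOnInfinityNormInTermsOfn} with parameter $p_0$ after the rescaling, yielding $\|\bm{y}\|_\infty \lesssim \sqrt{p_0}\, n^{1/2-1/p_0}\cdot m^{1/p_0-1/p} \lesssim \sqrt{\log(2m/n)}\, n^{1/2-1/p}$, so that the net scalar prefactor is $\sqrt{\log(2m/n)}$ rather than $\sqrt{p}$ in this regime. Taking the minimum of the two regimes' bounds gives the stated $\sqrt{\min\{p,\log(2m/n)\}}$. The main obstacle throughout is purely the exponent arithmetic and the calibration of $p_0$ so the stray $m^{1/p_0}$ factor is absorbed into a constant; the probabilistic content is entirely supplied by the two preceding lemmas.
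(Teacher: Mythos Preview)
Your proposal is correct and follows essentially the same route as the paper: interpolate via $\|\bm{y}\|_q \le \|\bm{y}\|_p^{p/q}\|\bm{y}\|_\infty^{1-p/q}$, pull out the almost-sure $\ell_\infty$ bound, apply Jensen and Lemma~\ref{lem:LM-ExpectedLpNorm}; then for the regime $p > \log(2m/n)$ set $p_0 := \ln(e^2 m/n)$ and redo the same estimate with $p_0$ in place of $p$, using $\|\bm{A}^j\|_{p_0} \le m^{1/p_0-1/p}$ and $(m/n)^{1/p_0} = O(1)$. The only cosmetic difference is that the paper packages the second regime as ``apply the already-proven bound with parameter $p_0$'' rather than re-invoking Lemmas~\ref{lem:LM-ExpectedLpNorm} and~\ref{lem:LM-UpperBoundOnInfinityNormInTermsOfn} separately, but the content is identical.
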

\begin{proof}
  We will make use of the inequality $\|\bm{z}\|_q \leq (\|\bm{z}\|_p^p \cdot \|\bm{z}\|_{\infty}^{q-p})^{1/q}$ for all $\bm{z} \in \setR^m$,
  see Lemma~\ref{lem:LqVsLpAndLinfinity}.
  Then combining the estimates from Lemma~\ref{lem:LM-ExpectedLpNorm} and Lemma~\ref{lem:LM-UpperBoundOnInfinityNormInTermsOfn} we obtain
  \begin{eqnarray*}
   \E_{\bm{x} \sim \pazocal{D}(\bm{A},\bm{x}_0)}[\|\bm{A}(\bm{x}-\bm{x}_0)\|_q] &\leq& \E_{\bm{x} \sim \pazocal{D}(\bm{A},\bm{x}_0)}\big[ \big(\|\bm{A}(\bm{x}-\bm{x}_0)\|_p^p \cdot \|\bm{A}(\bm{x}-\bm{x}_0)\|_{\infty}^{q-p}\big)^{1/q}\big]  \quad \quad (****) \\
                                                            &\stackrel{\textrm{Lem~\ref{lem:LM-UpperBoundOnInfinityNormInTermsOfn}}}{\lesssim}& \E_{\bm{x} \sim \pazocal{D}(\bm{A},\bm{x}_0)}\big[ \|\bm{A}(\bm{x}-\bm{x}_0)\|_p^{p/q}\big] \cdot (\sqrt{p} n^{1/2-1/p})^{(q-p)/q} \\
    &\stackrel{\textrm{Jensen}}{\leq}& \E_{\bm{x} \sim \pazocal{D}(\bm{A},\bm{x}_0)}\big[ \|\bm{A}(\bm{x}-\bm{x}_0)\|_p\big]^{p/q} \cdot (\sqrt{p} n^{1/2-1/p})^{(q-p)/q} \\
     &\stackrel{\textrm{Lem~\ref{lem:LM-ExpectedLpNorm}}}{\lesssim}& (\sqrt{pn})^{p/q}  \cdot (\sqrt{p} n^{1/2-1/p})^{(q-p)/q}
                  = \sqrt{p n} \cdot n^{-1/p + 1/q}
  \end{eqnarray*}
  Here we have used Jensen's inequality (see Lemma~\ref{lem:JensenForConcave}) with the concavity of the function $y \mapsto y^{p/q}$ for $0<y<\infty$.

  Note that this settles the claim in the parameter range  $p \leq \ln(e^2\frac{m}{n})$. Now consider the
  case of  $p > \ln(e^2\frac{m}{n})$. We define $p_0 := \ln(e^2\frac{m}{n})$. Note that $2 \leq p_0 \leq p$.
  Then applying the bound of $(****)$ proven above, with parameters $2 \leq p_0 \leq q$ we
  obtain 
  \begin{eqnarray*}
    \Pr_{\bm{x} \sim \pazocal{D}(\bm{A},\bm{x}_0)}\big[\|\bm{A}(\bm{x}-\bm{x}_0)\|_q\big] &\lesssim& \sqrt{p_0} \cdot n^{1/2 - 1/p_0 + 1/q} \cdot \max\{ \|\bm{A}^j\|_{p_0} : j=1\ldots,n\} \\
           &\leq& \sqrt{p_0} \cdot n^{1/2-1/p+1/q} \cdot m^{1/p_0-1/p} \cdot \max\{ \underbrace{\|\bm{A}^j\|_p}_{\leq 1} : j=1,\ldots,n\} \\
           &=& \sqrt{p_0} \cdot n^{1/2-1/p+1/q} \cdot \underbrace{\Big(\frac{m}{n}\Big)^{-1/p}}_{\leq 1} \cdot \Big(\frac{m}{n}\Big)^{1/p_0} \\
    &\leq& \sqrt{\ln\Big(e^2\frac{m}{n}\Big)} \cdot n^{1/2-1/p+1/q} \cdot \underbrace{\Big(\frac{m}{n}\Big)^{1 / \ln(e^2\frac{m}{n})}}_{\leq O(1)}
  \end{eqnarray*}
  This completes the proof.
\end{proof}

\subsection{From partial colorings to full colorings}

The result from Theorem~\ref{thm:LqBoundForLpBoundedColumns} allows us to find a partial coloring --- the next step is to iterate the argument in order to find a full coloring:
\begin{proof}[Proof of Theorem~\ref{thm:VectorBalancingFromBpToBq}]
  Consider $n$ vectors from $B_p^m$ that we conveniently write as column vectors $\bm{A}^1,\ldots,\bm{A}^n \in B_p^m$. We set $\bm{x}_0 := \bm{0} \in \setR^n$ and $n_0 := n$. In iteration $t=0,1,\ldots$ we
  have maintained a vector $\bm{x}_t \in [-1,1]^n$ where $n_t := |\{ i \in [n] \mid -1<x_t(i)<1\}|$
  denotes the number of uncolored elements. We draw $\bm{x}_{t+1} \sim \pazocal{D}(\bm{A},\bm{x}_t)$ and repeat until
  the bound of the expectation provided by Theorem~\ref{thm:LqBoundForLpBoundedColumns} is
  attained (say up to a factor of 2). In particular
  \[
    \big\|\bm{A}(\bm{x}_{t+1}-\bm{x}_t)\big\|_q \lesssim \sqrt{\min\big\{ p,\log\big(\tfrac{2m}{n_t}\big)\big\} } \cdot n_t^{1/2-1/p+1/q}
  \]
  and $n_{t+1} \leq n_{t}/2$. Then $n_{t+1} \leq \frac{n}{2^t}$ and so $\bm{x}^* := \bm{x}_{\log_2(n)+1}$
  will be in $\{ -1,1\}^n$ and by the triangle inequality, the discrepancy is at most
  \[
    \|\bm{A}\bm{x}^*\|_q \lesssim \sum_{t \geq 0} \sqrt{\min\big\{ p,\log\big(\tfrac{2m}{n/2^t}\big)\big\} } \cdot (n/2^t)^{1/2-1/p+1/q} \lesssim \frac{\sqrt{\min\{ p,\log(\frac{2m}{n})\} }}{1/2-1/p+1/q}  n^{1/2-1/p+1/q}
  \]
  To see the ultimate inequality, consider the exponent $\alpha := 1/2-1/p+1/q>0$. Then it takes $1/\alpha$ iterations until the quantity $n_t^{\alpha}$ has decreased by a factor of 2 (while the term $\log(2m/n_t)$ has a miniscule growth). Then the cumulated discrepancy is dominated by the first $\frac{1}{\alpha}$ terms. The running time is bounded by $O(\log n) \cdot T(m,n)$.
\end{proof}

\section{Approximate Carath\'eodory bounds for $\| \cdot \|_p$ norms}
Next, we prove the bound on $\textrm{ac}_k(B_p^m,B_q^m)$ claimed in Theorem~\ref{ApxCaraForLpLq}: 
\begin{proof}[Proof of Theorem~\ref{ApxCaraForLpLq}]
  Let  $2 \le p \le q \le \infty$. We apply the reduction to the vector balancing constant from Theorem~\ref{thm:ReductionApxCaraToVB} and combine this with the bound from Theorem~\ref{thm:VectorBalancingFromBpToBq}:
  \begin{eqnarray*}
    \textrm{ac}_k(B_p^m,B_q^m) &\stackrel{\textrm{Thm~\ref{thm:ReductionApxCaraToVB}}}{\leq}& \sum_{\ell \geq 1} \frac{1}{k2^{\ell}} \cdot \textrm{vb}_{k2^{\ell}}(B^m_p, B^m_q) \\
                               &\stackrel{\textrm{Thm~\ref{thm:VectorBalancingFromBpToBq}}}{\lesssim}& \frac{1}{\frac{1}{2}-\frac{1}{p}+\frac{1}{q}}\sum_{\ell \geq 1} \frac{\sqrt{\min\{ p, \log (\frac{2m}{k 2^\ell}) \}}}{(k 2^\ell)^{1/2+1/p-1/q}} \\ & \lesssim& \frac{1}{\frac{1}{2}-\frac{1}{p}+\frac{1}{q}} \cdot \frac{\sqrt{\min\{ p, \log (\frac{2m}{k}) \}}}{k^{1/2+1/p-1/q}}.  \end{eqnarray*}
                             Note that the exponent $\alpha := 1/2+1/p-1/q$ satisfies $\alpha \geq 1/2$ and so the sum is already dominated by the very first term. The running time to find the vectors $\bm{v}_1,\ldots,\bm{v}_k$ is dominated by $O(\log m)$ calls to find the coloring behind $\textrm{vb}_{t}(B_p^m,B_q^m)$ where $t \leq m+1$ which results in a total running time
                             of $O(\log^2 m) \cdot T(m,m+1) \leq O(m^{1+\omega}\log^2(m))$ as $\omega \geq 2$.
\end{proof}

\begin{remark}
In the case where $p = 2$ and $q = \infty$, we can apply Theorem~\ref{ApxCaraForLpLq} with $q' := \log_2 m$ to obtain $\textrm{ac}_k(B_2^m, B_\infty^m) \lesssim \frac{\log m}{k}$ by noting that Lemma~\ref{lem:LpVsLq} implies $\|\bm{x}\|_\infty \le \|\bm{x}\|_{\log_2 m} \le 2 \|\bm{x}\|_\infty$ for any $\bm{x} \in \setR^m$. But for $P = B^m_2$ and $Q = B^m_\infty$ one has $\textrm{vb}_k (B^m_2, B^m_\infty) \lesssim \sqrt{\log m}$ by the result of Banaszczyk~\cite{BalancingVectors-Banaszczyk98}, so that Theorem~\ref{thm:ReductionApxCaraToVB} yields the improved bound \[\textrm{ac}_k(B_2^m, B_\infty^m) \lesssim \frac{\sqrt{\log m}}{k}.\]
It remains an interesting open question whether this may be improved to $O(\frac{1}{k})$.
\end{remark}

\section{Lower bounds for vector balancing}

In this section we show that the vector balancing bounds in Theorem~\ref{thm:VectorBalancingFromBpToBq} are tight up to the factor of $\frac{1}{2} - \frac{1}{p} + \frac{1}{q}$:

\begin{proof}[Proof of Theorem~\ref{thm:VBLowerBound}]
First, let us focus on the case when $m \le 2^p n$ so that $\log(2m/n) = \min\Big\{ p, \log \Big(\frac{2m}{n}\Big) \Big\}$. We will also assume that $m \ge 8n$, since otherwise we can use instead $n' := \lfloor n/8 \rfloor$ and add $n-n'$ columns of zeros.
 Let $\bm{B}$ denote an $m \times n$ random matrix with i.i.d $\pm 1$ entries. For any fixed $\bm{x} \in \{-1,1\}^n$, let $N_{\bm{x}}$ denote the number of rows $i \in [m]$ with $\inn{\bm{B}_i}{\bm{x}} \ge \lambda \sqrt{n}$ for $\lambda :=  \sqrt{\frac{2}{9} \log\Big(\frac{m}{2n}\Big)}$. Since $\lambda \le \sqrt{n}/2$, by Lemma~\ref{lem:AntiChernoff} we have  
\[\Pr\Big[\inn{\bm{B}_i}{\bm{x}} \ge \lambda \sqrt{n}\Big] \ge \frac{2n}{m},\] 
so that $\E[N_{\bm{x}}] \ge 2n$. The standard Chernoff bound then gives $\Pr[N_{\bm{x}} \le (1-0.9) \cdot 2n] < 2^{-n}$, so that by the union bound there exists a matrix $\bm{B} \in \{-1,1\}^{m \times n}$ for which $N_{\bm{x}} \ge n/5$ for all $\bm{x} \in \{-1,1\}^n$. Thus for any such $\bm{x}$,
\[ \|\bm{Bx}\|_q \ge (|N_{\bm{x}}| \cdot (\lambda \sqrt{n})^q)^{1/q} \gtrsim n^{1/2+1/q} \log\Big(\frac{m}{2n}\Big)^{1/2} \gtrsim n^{1/2+1/q} \log\Big(\frac{2m}{n}\Big)^{1/2},\]
where in the last step we used $m \ge 8n$. The matrix $\bm{A} := m^{-1/p} \bm{B}$ has columns in $B^m_p$ and 
\[\|\bm{Ax}\|_q \gtrsim \sqrt{\log\Big(\frac{m}{2n}\Big)} \cdot \frac{n^{1/2+1/q}}{m^{1/p}} \gtrsim \sqrt{\log\Big(\frac{m}{2n}\Big)}\cdot  n^{1/2-1/p+1/q}\] for all $\bm{x} \in \{-1,1\}^n$, as claimed. 

For the case when $m \ge 2^p n$ we may use the same construction for $m' := \lfloor 2^p n \rfloor$ with $m-m'$ additional rows of zeros.
 \end{proof}

\bibliographystyle{alpha}
\bibliography{approxCaratheodory}

\appendix

\section{The Lovett-Meka random walk\label{appendix:LovettMeka}}

In this section we want to justify Theorem~\ref{thm:LovettMekaRephrased}. 
First, the original main technical statement of Lovett and Meka~\cite{DiscrepancyMinimization-LovettMekaFOCS12} is as follows:
\begin{theorem}[\cite{DiscrepancyMinimization-LovettMekaFOCS12}] \label{thm:LovettMekaOriginal}
  Let $\bm{A}_1,\ldots,\bm{A}_m \in \setR^n$, $\bm{x}_0 \in [-1,1]^n$, $\delta>0$ small enough and let $\lambda_1,\ldots,\lambda_m \geq 0$ so that $\sum_{i=1}^m \exp(-\lambda_i^2/16) \leq \frac{n}{16}$. Then there exists an efficient randomized algorithm which with probability at least $\frac{1}{10}$ finds a point $\bm{x} \in [-1,1]^n$ so that
  \begin{enumerate}
  \item[(i)] $|\left<\bm{A}_i,\bm{x}-\bm{x}_0\right>| \leq \lambda_i\|\bm{A}_i\|_2$ for all $i=1,\ldots,m$.
  \item[(ii)] $|x_j| \geq 1-\delta$ for at least $n/2$ many indices.
  \end{enumerate}
  Moreover, the algorithm runs in time $O( (m+n)^3 \cdot \delta^{-2} \log(mn/\delta))$.
\end{theorem}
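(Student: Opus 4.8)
The plan is to run the Lovett--Meka \emph{constrained Gaussian random walk} (the ``edge walk''). Fix a small step size $\gamma>0$ and a number of steps $T=\Theta(1/\gamma^2)$, and initialize $\bm x:=\bm x_0$. At step $t$, call a coordinate $j$ \emph{frozen} once $|x_j|\ge 1-\delta$, and call a constraint $i$ \emph{dead} once the running discrepancy $|\langle\bm A_i,\bm x-\bm x_0\rangle|$ has at some earlier step reached a threshold $\theta\lambda_i\|\bm A_i\|_2$ for a fixed $\theta\in(0,1)$ close to $1$. Let $W_t\subseteq\setR^n$ be the subspace of directions $\bm w$ with $w_j=0$ for every frozen $j$ and $\langle\bm A_i,\bm w\rangle=0$ for every dead $i$; draw $\bm g_t$ a standard Gaussian supported on $W_t$, set $\bm x\leftarrow\bm x+\gamma\bm g_t$, and update the frozen/dead sets. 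After $T$ steps, output $\bm x$. Since the walk never moves a frozen coordinate and never moves along a dead constraint's normal, property (i) holds automatically up to the one-step overshoot (controlled below), and (ii) reduces to showing that at least $n/2$ coordinates become frozen.

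Two estimates drive the analysis. First, an \emph{energy identity}: the process $\|\bm x_t-\bm x_0\|_2^2-\gamma^2\sum_{s<t}\dim W_s$ is a martingale, because $\E[\|\bm x_{t+1}-\bm x_0\|_2^2\mid\mathcal{F}_t]=\|\bm x_t-\bm x_0\|_2^2+\gamma^2\dim W_t$; hence for any stopping time $\tau$ one has $\gamma^2\E[\sum_{s<\tau}\dim W_s]=\E[\|\bm x_\tau-\bm x_0\|_2^2]\le 4n$. Since $\dim W_t\ge(\#\text{ alive coordinates})-(\#\text{ dead constraints})$, as long as fewer than $n/2$ coordinates are frozen and at most $\varepsilon n$ constraints are dead we have $\dim W_t=\Omega(n)$, so the left-hand side must grow linearly in the number of steps; combined with a concentration bound on $\sum_{s}\|\bm g_s\|_2^2$ and the cap $4n$, this shows that if $T$ is a suitable constant times $1/\gamma^2$ then, with high probability, at least $n/2$ coordinates are frozen by step $T$ \emph{conditionally on} the number of dead constraints never exceeding $\varepsilon n$. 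Second, a \emph{maximal inequality}: for each $i$ the stopped process $t\mapsto\langle\bm A_i,\bm x_t-\bm x_0\rangle$ is a martingale whose increments are conditionally Gaussian of variance at most $\gamma^2\|\bm A_i\|_2^2$, so its total quadratic variation is at most $T\gamma^2\|\bm A_i\|_2^2=\Theta(\|\bm A_i\|_2^2)$; a Doob/reflection estimate then gives $\Pr[\text{constraint }i\text{ ever dead}]\lesssim\exp(-\Omega(\lambda_i^2))$, and with the constants arranged so that the exponent is $\lambda_i^2/16$, summing against the hypothesis $\sum_i\exp(-\lambda_i^2/16)\le n/16$ yields $\E[\#\text{ dead constraints}]\le\varepsilon n$. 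Markov's inequality then makes ``$\le\varepsilon n$ dead constraints'' an event of probability $1-O(1)$, and on that event the energy argument forces $\ge n/2$ frozen coordinates; one checks the two bad probabilities sum to less than $9/10$.

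For the running time, the freezing thresholds are placed strictly inside the true ones so that the worst-case one-step overshoot, which is $O(\gamma\sqrt{\log(mnT)})$ with high probability, cannot push a coordinate past $\pm1$ or a constraint past $\lambda_i\|\bm A_i\|_2$. This forces $\gamma\lesssim\delta/\sqrt{\log(mn/\delta)}$, hence $T=\Theta(\delta^{-2}\log(mn/\delta))$ steps; each step costs $O((m+n)^3)$ to assemble a basis of $W_t$ and sample $\bm g_t$, which gives the stated $O((m+n)^3\delta^{-2}\log(mn/\delta))$.

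The main obstacle is the balancing of the single scalar $c:=T\gamma^2$: the energy argument wants $c$ large (to freeze enough coordinates within the time budget), whereas the maximal inequality wants $c$ small (so that $\exp(-\lambda_i^2/(2c))$ is summable against $\sum_i\exp(-\lambda_i^2/16)$), and making the admissible window for $c$ nonempty requires exploiting every bit of slack --- the precise constant in $\E\|\bm x_\tau-\bm x_0\|_2^2\le 4n$, the precise constant in the maximal inequality, and the inset parameter $\theta$ --- which is exactly why the hypothesis carries the specific constants $1/16$ and $n/16$. A secondary nuisance is ruling out the degenerate state $\dim W_t=0$ while more than $n/2$ coordinates are still alive: that would force $\ge n/2$ dead constraints, contradicting the count on the good event. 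For the exact numeric constants I would follow \cite{DiscrepancyMinimization-LovettMekaFOCS12}.
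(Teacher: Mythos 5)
This statement is quoted verbatim from Lovett--Meka, and the paper itself offers no proof of it: the appendix records it as a citation, describes the underlying walk in two sentences, and then analyzes a \emph{modified} walk (uniform direction on $S^{n-1}\cap V_t$, adaptive step length capped at the boundary of $K$) in order to justify the rephrased Theorem~\ref{thm:LovettMekaRephrased}. Your sketch is a faithful reconstruction of the original Lovett--Meka argument, and its three pillars --- the energy martingale $\|\bm{x}_t-\bm{x}_0\|_2^2-\gamma^2\sum_{s<t}\dim W_s$, the lower bound $\dim W_t\ge a_t-d_t$ (with $a_t$ the number of alive coordinates and $d_t$ the number of dead constraints), and the Doob-type maximal inequality $\Pr[\text{constraint }i\text{ dies}]\lesssim\exp(-\Omega(\lambda_i^2/(T\gamma^2)))$ --- are exactly the right ones, as is the accounting $\gamma\lesssim\delta/\sqrt{\log(mn/\delta)}$ behind the running time.

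Two points to tighten. First, the phrase ``at least $n/2$ coordinates are frozen by step $T$ \emph{conditionally on} the number of dead constraints never exceeding $\varepsilon n$'' is not right as stated: conditioning on that event destroys the martingale property underlying the energy identity. The clean route (and the one in \cite{DiscrepancyMinimization-LovettMekaFOCS12}) is unconditional: since frozen coordinates and dead constraints only accumulate, $\sum_{s<T}\dim W_s\ge T\cdot\dim W_{T-1}\ge T\,(n-f_T-d_T)$ where $f_T$ is the number of frozen coordinates, hence $\E[n-f_T]\le \frac{4n}{T\gamma^2}+\E[d_T]$, and a single application of Markov to $n-f_T$ yields the probability $\ge\frac{1}{10}$; your ``two bad events'' remark can be organized the same way, but without conditioning. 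Second, you correctly identify that the proof lives or dies on whether the window for $c:=T\gamma^2$ is nonempty --- large enough that $4/c$ is a small constant, small enough that $\exp(-\theta^2\lambda_i^2/(2c))$ is summable against $\sum_i\exp(-\lambda_i^2/16)\le n/16$ --- but you then defer exactly this verification to the citation. A naive accounting (the factor $2$ from the two-sided maximal inequality, the $4n$ energy cap, the constant lost to Markov) leaves essentially no slack, so this is the one genuinely delicate step rather than a routine constant chase. Since the paper also treats the theorem as a black box, deferring to \cite{DiscrepancyMinimization-LovettMekaFOCS12} here is defensible, but the sketch as written does not discharge it.
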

First, note that if $\Delta$ is a  parameter with $\sum_{i=1}^m \exp(-\frac{\Delta^2}{16\|\bm{A}_i\|_2^2}) \leq \frac{n}{16}$,
then $\lambda_i := \frac{\Delta}{\|\bm{A}_i\|_2}$ is a feasible choice. 
The algorithm behind Theorem~\ref{thm:LovettMekaOriginal} works as follows: let $\gamma>0$ be a parameter that is logarithmically smaller than $\delta$. We compute a random sequence $\bm{x}_0,\bm{x}_1,\bm{x}_2,\ldots$ of points.
In iteration $t$ we have already computed $\bm{x}_t$; next we define a subspace $V_t \subseteq \setR^n$ that is orthogonal to coordinates $\bm{e}_i$ with $|x_i| \geq 1-\delta$ and to directions $\bm{A}_i$ with $|\left<\bm{A}_i,\bm{x}-\bm{x}_0\right>| \geq \lambda_i-\delta$. Then update $\bm{x}_{t+1} = \bm{x}_t + \gamma \bm{u}_t$ where $\bm{u}_t \sim N(\bm{0},\bm{I}_{V_t})$ is a random Gaussian from that subspace $V_t$.

What is slightly unsatisfactory is that we will not actually have coordinates equal to $\pm 1$.
But there is an easy way to remedy this: consider the polyhedron $K := \{ \bm{x} \in [-1,1]^n \mid |\left<\bm{A}_i,\bm{x}-\bm{x}_0\right>| \leq \lambda_i\|\bm{A}_i\|_2 \;\; \forall i=1,\ldots,m\}$.
We say that a coordinate $j$ is \emph{tight} for a point $\bm{x}$
if $|x_j| = 1$ and a constraint $i$ is \emph{tight} for $\bm{x}$ if $|\left<\bm{A}_i,\bm{x}-\bm{x}_0\right>| = \lambda_i\|\bm{A}_i\|_2$.
Then consider the following random walk:
\begin{enumerate}
\item[(1)] FOR $t=1,2,\ldots,8n$ DO
  \begin{enumerate}
  \item[(2)] Let $V_t := \{ \bm{y} \in \setR^n \mid \left<\bm{y},\bm{x}_t\right>=0; \; y_j=0\textrm{ if }j \in [n]\textrm{ is tight for }\bm{x}_t; \; \left<\bm{A}_i,\bm{y}\right> = 0\textrm{ if }i\textrm{ is tight for }\bm{x}_t\}$. If $\dim(V_t) \leq \frac{n}{8}$ then return FAIL
  \item[(3)] Let $\bm{u}_t \sim S^{n-1} \cap V_t$.
  \item[(4)] Let $r_t := \max\{ r \geq 0 \mid \bm{x}_t + r \bm{u}_t \in K\textrm{ and }\bm{x}_t - r\bm{u}_t \in K\}$
  \item[(5)] Let $\delta_t := \min\{ 1,r_t\}$
  \item[(6)] Update $\bm{x}_{t+1} := \bm{x}_t + \sigma_t\delta_t \bm{u}_t$ where $\sigma_t \sim \{ -1,1\}$
  \item[(7)] IF $\bm{x}_{t+1}$ has at least $n/2$ coordinates tight THEN return $\bm{x}_{t+1}$. 
  \end{enumerate}
  \item[(8)] RETURN FAIL
  \end{enumerate}

  The key modification compared to \cite{DiscrepancyMinimization-LovettMekaFOCS12} is that the
  step length is not uniform but it is capped so that we do not exit $K$.
  As we walk orthogonal to the current point, one has $\|\bm{x}_{t+1}\|_2^2 = \|\bm{x}_t\|_2^2 + \delta_t^2$ for all $t$
  and so $\sum_t \delta_t^2 \leq 4n$. Let us call a step $t$ \emph{long} if $\delta_t = 1$
  and \emph{short} otherwise. So we cannot have more than $4n$ many long steps. But in each short step (i.e. $\delta_r = r_t$)
  we have a probability of at least 1/2 that some coordinate or constraint becomes tight
  and this cannot happen more than $\frac{7}{8}n$ times before exiting at (2).
  So the probability of reaching $(8)$ is less than $(1/2)^n$.
  
  Now let $\bm{x}$ be the random vector when the algorithm stops (either in (2), (7) or (8)).
  Fix a constraint $i$; the next step is to analyze the concentration behaviour of $\left<\bm{A}_i,\bm{x}-\bm{x}_0\right>$.
  In each iteration $t$ one has $\|\left<\bm{A}_i,\sigma_t\delta_t\bm{u}_t\right>\|_{\psi_2} \leq \|\left<\bm{A}_i,\bm{u}_t\right>\|_{\psi_2} \lesssim \frac{\|\bm{A}_i\|_2}{\sqrt{n}}$ by Lemma~\ref{lem:PropertiesOfSubgaussianNorm} as $\bm{u}_t$
  is a unit vector from a subspace of dimension at least $n/8$. 
  As there are at most $8n$ iterations, we have $\|\left<\bm{A}_i,\bm{x}-\bm{x}_0\right>\|_{\psi_2} \lesssim 8n \cdot (\frac{\|\bm{A}_i\|_2}{\sqrt{n}})^2 \lesssim \|\bm{A}_i\|_2$.
  Then $\Pr[|\left<\bm{A}_i,\bm{x}_0-\bm{x}\right>| \geq \lambda_i\|\bm{A}_i\|_2] \leq \exp(-C_1\lambda_i^2)$ by Lemma~\ref{lem:PropertiesOfSubgaussianNorm}.(D) for some constant  $C_1>0$. In other words, the probability that the $i$th constraint becomes tight at any point in the algorithm is upper bounded by $\exp(-C_1\lambda_i^2)$
  and so with good probability the number of tight constraints remains below, say, $\frac{n}{4}$.
  This gives an upper bound on the probability to exit in (2). Finally,
  we define $\pazocal{D}(\bm{A},\bm{x}_0)$ as the vector $\bm{x}_{t+1}$ returned in (7), that means
  conditioned on the run being successful.
  
  We briefly discuss the running time. In every of the at most $8n$ iterations we compute a basis of
  $V_t$ which using fast matrix multiplication can be done in time $O(n^{\omega})$. Then
  to compute $r_t$ (and to determine which constraints determine $V_t$) can be done in time $O(nm)$.
  This results in the claimed bound of $T(m,n) \leq O(n^{1+\omega} + n^2m)$.

\end{document}